\theoremstyle{plain}
\newtheorem{definition}{Definition}
\newtheorem{theorem}{Theorem}
\newtheorem{proposition}{Proposition}
\newtheorem{lemma}{Lemma}
\newtheorem{remark}{Remark}
\numberwithin{equation}{section}
\numberwithin{theorem}{section}
\numberwithin{proposition}{section}
\numberwithin{definition}{section}
\numberwithin{remark}{section}
\numberwithin{corollary}{section}
\numberwithin{lemma}{section}
\newcommand*\bigcdot{\mathpalette\bigcdot@{.5}}
\newcommand*\bigcdot@[2]{\mathbin{\vcenter{\hbox{\scalebox{#2}{$\m@th#1\bullet$}}}}}
\newcommand\blfootnote[1]{%
	\begingroup
	\renewcommand\thefootnote{}\footnote{#1}%
	\addtocounter{footnote}{-1}%
	\endgroup
}
\begin{document}
	
	\renewcommand{\thefootnote}{\arabic{footnote}}
	%$^{,}$
	\begin{center}
		{\Large \textbf{ Reflected Mckean-Vlasov stochastic differential equations  with jumps in time-dependent domains }} \\[0pt]
	~\\[0pt] 

Imane Jarni\textcolor{blue}{$^{*}$}{\blfootnote{* Corresponding author.}\footnote[1]{College of Computing, Mohammed VI Polytechnic University, Morocco. E-mail: \texttt{imane.jarni@um6p.ma, jarni.imane@gmail.com}}}, Badr Missaoui{\footnote[2]{Moroccan Center of Game Theory, Mohammed VI Polytechnic University, Morocco.
				E-mail: \texttt{badr.missaoui@um6p.ma}}} and Youssef Ouknine \textcolor{blue}{\footnote[3]{Africa Business School, Mohammed VI Polytechnic University, Morocco.}\footnote[4]{Mathematics Department, Faculty of Sciences Semlalia, Cadi Ayyad University, Marrakesh, Morocco.}\footnote[5]{Hassan II Academy of Sciences and Technologies, Rabat, Morocco. E-mail:\texttt{ouknine@uca.ac.ma, youssef.ouknine@um6p.ma}}}
		\\[0pt]

		~\\[0pt]
	\end{center}
	
	\renewcommand{\thefootnote}{\arabic{footnote}}

	\begin{abstract}
		In this paper, we investigate the deterministic multidimensional Skorokhod problem with normal reflection in a family of time-dependent convex domains that are càdlàg with respect to the Hausdorff metric. We then show the existence and uniqueness of solutions to multidimensional McKean-Vlasov stochastic differential equations reflected in these time-dependent domains. Additionally, we derive stability properties with respect to the initial condition and the coefficients. Finally, we establish a propagation of chaos result.
	\end{abstract}
	{\bf Keyword}: Reflection; Skorokhod problem;  Time-dependent convex region;  Reflected  Mckean-Vlasov; Stability.\\
	\textbf{AMS Subject Classification:}  60H20, 60G07.
	
	\section{Introduction}
	
	Takana in \cite{tanaka} was the first to generalize Skorokhod's work \cite{skorokhod} from one-dimensional to multidimensional reflection problems, by constructing solutions reflected in time-independent convex domains. These results were subsequently extended to encompass a broader class of time-independent domains and driving processes (see, e.g., \cite{lion}, \cite{saicho}, \cite{slominski93}). It is worth noting that the literature primarily focuses on two main types of multidimensional reflection problems. Those with normal reflection and those with oblique reflection, the latter of which occurs when the cone of reflection differs from the cone of inward normal.
	
	In the context of time-dependent domains with oblique reflection, many works address different types of domains or trajectories (see, for instance, \cite{cost}, \cite{depuis2}, \cite{Kwon}, \cite{Lund}). Of particular note is the recent work by Nyströ and Önsko \cite{Nyst}, which extends many of the previously cited results. The authors demonstrate the existence and uniqueness of strong càdlàg solutions for obliquely reflected SDEs in non-smooth time-dependent domains.
	
	To the best of our knowledge, the case of normal reflection is less developed in the context of reflected SDEs in time-dependent domains. We quote the work by Bass and Hsu \cite{Bass}, who demonstrated the existence of a strong solution to multidimensional SDEs in smooth time-dependent domains and established weak uniqueness for this equation. However, the seminal reference in the context of normal reflection is the work by Costantini, Gobet, and El Karoui \cite{elkaroui}. In their work, the authors proved the existence and uniqueness of solutions to the Skorokhod problem in smooth time-dependent domains for a more general equation than that considered by \cite{Bass}.
	
	In all previous works, the assumptions made on the domains are geometric, particularly concerning the regularity of the domain. The one-dimensional Skorokhod problem reflected in a time-independent interval has been well investigated (see, e.g., \cite{burzdy}, \cite{jarni20}, \cite{kruk} \cite{jarni21}, \cite{slominski2010}). Typically, assumptions are made regarding the trajectory of the barriers.
	
	Klimsiak, Rozkosz, and Słomiński \cite{slom15} proved the existence and uniqueness of multidimensional BSDEs with continuous driven process with normal reflection in a time-dependent domain under conditions similar to the one-dimensional case. Specifically, they assumed that  the trajectory of the domain is  right-continuous and left-limited (càdlàg) with respect to the Hausdorff metric. They  also considered the so-called Mokobodzki condition. Additionally, we mention the work by Fakhouri, Ouknine, and Ren \cite{ouknine20}, which investigates BSDEs driven by a Brownian motion and a Poisson point process when the domain is continuous with respect to the Hausdorff metric. Furthermore, Eddahbi, Fakhouri, and Ouknine \cite{ouknine18} dealt with the general case of càdlàg driven processes and càdlàg domains with respect to the Hausdorff metric. Note that all these works about reflected BSDEs are extension of  work by Gegout-Petit and Pardoux in \cite{petit} to time-dependent domains. Also, we mention that in the context of SDEs in time-dependent domains, such an approach has never been studied in the manner of \cite{slom15}. Our goal is to study reflected McKean–Vlasov SDEs in a time-dependent domain in this manner, i.e., by considering conditions on the regularity of the trajectory of the domain instead of geometric assumptions.
	
	The McKean–Vlasov SDEs, also known as mean-field SDEs, are a class of SDEs whose coefficients depend on the law of the solution. These equations have attracted attention due to their numerous applications, particularly in the study of large-population particle systems, which are well-formulated in various scientific fields such as economics, social science, and finance (see, e.g., \cite{garnier}, \cite{lambson}, \cite{lasry}).
	
	In the context of reflected McKean–Vlasov equations, Sznitman in \cite{sznitman} was the first to study the existence and uniqueness of reflected McKean–Vlasov equations in smooth bounded time-independent domains. Recently, Adam et. al. in \cite{Adam} studied reflected McKean–Vlasov diffusions in unbounded convex time-independent domains driven by a Brownian motion, considering weaker assumptions on the coefficients of the reflected SDEs.

	Our goal in this paper is to investigate reflected McKean–Vlasov equations of Wiener–Poisson type in time-dependent domains in a similar form to \cite{slom15}. More precisely, we are given a family $\mathcal{D}=\{D_t, t\in [0,T]\}$ of time-dependent  càdlàg domains with respect to Hausdorff metric. We are interested in investigating the following system:
	$$\textbf(\Gamma)\left\{\begin{aligned}
		&X_t= X_{0}+ \int_{0}^{t} b\left(s, X_s, \mathcal{L}_{X_s}\right) \mathrm{d} s+ \int_{0}^{t} \sigma\left(s, X_s, \mathcal{L}_{X_s}\right) \mathrm{d} W_s \\
		& \quad\quad	+\int_{0}^{t}\int_{\mathbb{R}^d\backslash\{0\}}\beta\left(s,  X_{s^-}, \mathcal{L}_{X_{s^-}},z\right)  \tilde{N}(\mathrm{d} s, \mathrm{d} z)+ K_t, \quad t \in[0, T], \\
		&
		X_t\in D_t, \quad \text{for every}\; t \in[0, T], \\
		& \text{for every  càdlàg and adapted process such that} \;Z_t\in D_t,\; \int_0^t \langle X_{s} - Z_s,  \mathrm{d}K_s \rangle \leq 0\;\text{for every}\;t\geq 0
	\end{aligned}\right.,$$
	where $W$ is a Brownian motion and $\tilde{N}$ is a martingale measure in $\mathbb{R}^d$. The quantities $b,\sigma,\beta$ are measurable random maps (for more details on the assumptions regarding these quantities, see Section 3), and $\mathcal{L}_{X_t}$ means the law of $X_t$, $t\geq 0$. By a solution to reflected SDEs, we mean an adapted càdlàg couple $(X,K)$ that satisfies the system $(\Gamma)$. The process $K$ is with finite total variation $|K|$ that acts only when $X$ reaches the boundary.
	
	In \cite{ouknine20}, \cite{ouknine18}, and \cite{slom15}, the authors show that the reflected BSDEs in multidimensional domains are solved under the so-called Mokobodzki condition, which assumes the existence of a special semimartingale $A$ such that $A \in\mathring{\mathcal{D}}$. The quadratic variation of the local martingale part of $A$ is integrable, and the total variation of its finite variation part is square integrable, with $\inf_{t\in [0,T]} d(A_t,\partial D_t) > 0$. In our case, we will consider a weaker assumptions by taking $A$ to be a càdlàg optional process in $\mathcal{S}_{0,T}^2$ (see the next subsection for notations).
	
	All the papers mentioned above, about reflected SDEs, the  construction of solutions is based on the properties of deterministic mapping on the space of continuous or right continuous left limited functions. Also in this work, our approach will be  based on the deterministic Skorokhod problem, therefore, our first goal of this paper is to introduce and solve reflection problem in deterministic case  that  suits the type of reflection  in the system $(\Gamma)$.
	
	An interesting aspect in the study of Mckean-Vlasov equations is the study of Propagation of chaos property. Indeed, we know that McKean-Vlasov equations naturally arise in N-particle systems, where particles interact in a mean-field way. This interaction is reflected in the dependence of coefficients on the empirical measure of the system. For a sufficiently large \( N \) and under certain conditions, particle trajectories become asymptotically independent, even in the presence of initial correlations. This phenomenon can be seen as the law of large numbers what we commonly refer to as the Propagation of Chaos property.
	Another interesting aspect in stochastic dynamical systems theory is the stability of solutions. The property of stability implies that trajectories remain relatively unchanged under minor disturbances. Therefore, one of the goals of this paper is to study the properties of solution stability and the Propagation of Chaos.
	
	\textit{Our main contribution and organization of the paper:}

	In section \ref{sec2}, we introduce a deterministic Skorokhod problem for càdlàg inputs in a time-dependent domain that is càdlàg with respect to the Hausdorff metric. We establish the existence and uniqueness of the solution to this problem.
	
	Section \ref{sec3} is divided into four subsections. In the first subsection, we study the problem of reflection for càdlàg optional processes, then provide some estimations for solutions, which will be useful for demonstrating the existence and uniqueness of solutions to the system $(\Gamma)$. In the second subsection, we demonstrate the existence of solutions to the system $(\Gamma)$ under conditions weaker than the Mokobodzki condition. In the third subsection, we study the stability of the system $(\Gamma)$ with respect to both the initial condition and the coefficients. In the last subsection, we examine the propagation of chaos property of $(\Gamma)$. 
	
	It is important to note that our results extend the theory of reflected SDEs and remain novel even without the McKean–Vlasov component.
	
	\subsection{Notation and Preliminaries}
	We introduce the following notations, which will be used throughout this paper.
	\begin{itemize}
		\item[$\bullet$] 	$\mathcal{C}:$ The space of all bounded closed convex subsets of $\mathbb{R}^d$ with nonempty interiors endowed with the Hausdorff metric $d_H$, i.e. any $D, D^{\prime} \in \mathcal{C}$,
		$$
		d_H\left(D, D^{\prime}\right)=\max \left(\sup _{x \in D} \operatorname{d}\left(x, D^{\prime}\right), \sup _{x \in D^{\prime}} \operatorname{d}(x, D)\right),
		$$
		where $\operatorname{d}(x, D)=\inf _{y \in D}|x-y|$, with |.| is the Euclidean norm on $\mathbb{R}^d.$
		\item[$\bullet$] $\mathbb{D}([0,T],\mathbb{R}^{d})$: The space of all mapping $y:[0,T]\to \mathbb{R}^d$ with càdlàg trajectories such that $y_T=y_{T^-}$.
		\item[$\bullet$]  Let $x,y\in\mathbb{D}([0,T],\mathbb{R}^{d})$. We set $|x|_T:=\sup_{s\in[0,T]}|x_s|$. By $d_{\mathbb{S}}$, we mean the Skorokhod metric on $\mathbb{D}([0,T],\mathbb{R}^{d})$. 
		\item[$\bullet$]  Let $a\in\mathbb{D}([0,T],\mathbb{R}^{d})$, $|a|_T$ denotes the total variation on $[0,T]$.
		\item[$\bullet$]  Let $D\in\mathcal{C}$, we denote by $\partial D$ the boundary of $D$. For $x\in\partial D$, $\mathcal{N}_{x}$ denotes the set of inward normal unit vectors at $x$ i-e $\eta \in \mathcal{N}_x$ iff for any $y\in D$, $\langle x-y, \eta\rangle \leq 0$.
		\item[$\bullet$]  Let $D\in\mathcal{C}$. For every $y\in\mathbb{R}^d$, we denote by  $\Pi_{D}(y)$ the projection of $y$ on $D$.
		\item[$\bullet$] If $\mathcal{D}=\{D_t,\; t\in[0,T]\}$ is a family of time-dependent sets such that  $D_t\in\mathcal{C}$ for every $t\geq 0$, then $D_t\subset B(0,r_t)$, where  $r_t$ denotes the radius of the ball $B(0,r_t)$. For $z\in \mathbb{D}([0,T],\mathbb{R}^{d}) $,  we say that  $z\in\mathcal{D}$ if  $z_t\in D_t$  for every $t\geq 0$. We define $\mathring{\mathcal{D}}$ as the set $\{\mathring{D}_t,\; t\in[0,T]\}$, where $\mathring{D}_t$ is the interior of $D_t$.
		\item[$\bullet$] For a topological space $E$, we denote by $\mathcal{B}(E)$ its Borel  $\sigma$-algebra, and by  $\mathcal{P}(E)$, the set of probability measures on  $(E,\mathcal{B}(E))$. We define $\mathcal{P}_2(E)$ as the set of all probability measures  on ($E,\mathcal{B}(E)$) with finite second-order moments. 
		For $\mu,\nu\in\mathcal{P}_2(E)$, the second-order Wasserstein distance is defined by the formula:
		$$
		W_d(\mu, \nu)=\inf \left\{\left[\int d(x,y)^2 \pi(d x, d y)\right]^{1 / 2} ; \pi \in \mathcal{P}(E \times E) \text { with marginals } \mu \text { and } \nu\right\} \text {. }
		$$
		Note that the distance considered in $\mathbb{R}^d$ is the Euclidean norm i-e., $d(x,y)=|x-y|$ for $(x,y)\in\mathbb{R}^{2d}$.
		
		Let $t\in [0,T]$. On the space $\mathbb{D}([0,t],\mathbb{R}^{d})$, we define the distance $d(\omega_1,\omega_2):=d_t(\omega_1,\omega_2)=\sup_{s\in [0,t]}|\omega_1(s)-\omega_2(s)|$ for every $\omega_1,\omega_2\in\mathbb{D}([0,t],\mathbb{R}^{d})$. 
		
		In the following, $W_t$ denotes $W_{d_{t}}$ on the space $\mathcal{P}_2(\mathbb{D}([0,t],\mathbb{R}^{d}))$, and $W$ denotes $W_{|.|}$ on the space $\mathcal{P}_2(\mathbb{R}^{d})$.
		\item For $X\in \mathbb{D}([0,T],\mathbb{R}^{d})$, $\mathcal{L}_{X}$ denotes the law of the process $X$ on $\mathcal{P}(\mathbb{D}([0,T],\mathbb{R}^{d})$. Similarly,  $\mathcal{L}_{X_t}$ denotes the law of $X_t$ on $\mathcal{P}(\mathbb{R}^d)$, where $t\geq 0$.
		\item If $\mu \in\mathcal{P}(\mathbb{D}([0,T],\mathbb{R}^d))$, we denote by $\mu_t$ the push-forward image of the measure $\mu$ under $\Pi_t$, and by $\mu_{t^-}$ the push-forward image of the measure $\mu$ under $\Pi_{t^-}$. Here, $(\Pi_t,\Pi_{t^-}):\mathbb{D}([0,T],\mathbb{R}^{d}) \rightarrow \mathbb{R}^d\times\mathbb{R}^d \text { is defined as } \Pi_{t}(\omega)=\omega_{t}\; \text{and}\;\; \Pi_{t^-}(\omega)=\omega_{t^-}  \text { for } \omega \in \mathbb{D}([0,T],\mathbb{R}^d)$.
		\item For $a\in\mathbb{R}^d, \delta_{a}$ denotes the Dirac measure at $a$.
		\item We denote the predictable $\sigma$-algebra by $\mathcal{P}$.
		\item For $k\geq 1$, we denote by $\mathbb{H}^{2,k}$ the set of $\mathbb{R}^d$-valued, càdlàg, and adapted processes $(X_{t})_{t\geq 0}$ such that $\mathbb{E}\left(\int_{0}^T|X_s|^2\mathrm{d}s\right)<\infty.$
		\item  For two stopping times $\tau$ and $\tau^{\prime}$ such that $\tau\leq \tau^{\prime}$, we denote by $\mathcal{S}_{\tau,\tau^{\prime}}^{2}$ the complete space of $\mathbb{R}^d$-valued, càdlàg,  and adapted processes   $(X_{t})_{t\geq 0}$ such that   $||X||_{\mathcal{S}^{2}_{\tau,\tau^{\prime}}}:=\mathbb{E}(\sup_{t\in [\tau,\tau^{\prime}]}|X_{t}|^2)^{\frac{1}{2}}<\infty$.
	\end{itemize}
	\begin{remark}(\cite{Menaldi})\label{convex}
		Note that for  $D\in\mathcal{C}$, we have:
		\begin{itemize}
			\item[i)] $\forall y \in D \quad\left\langle \Pi_D(x)-x, \Pi_D(x)-y\right\rangle \leq 0 .$
			\item[ii)] If  $a \in \mathring{D}$, then for every $\eta \in \mathcal{N}_x$,
			$$
			\langle x-a, \eta\rangle \leq-\operatorname{dist}(a, \partial D).
			$$
		\end{itemize}
	\end{remark}
	\begin{proposition}(\cite{Bill})\label{bill_r}
		For each $y$ in $\mathbb{D}([0,T],\mathbb{R}^{d})$ and each positive $\epsilon$, there exist points $t_0, t_1, \ldots, t_v$ such that
		$$
		0=t_0<t_1<\cdots<t_v=T,
		$$
		and
		$$
		\sup_{t,s\in[t_{i-1}, t_i[}|y_t-y_s|<\epsilon, \quad i=1,2, \ldots, v .
		$$
	\end{proposition}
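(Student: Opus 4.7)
The plan is to construct the partition greedily using the càdlàg regularity of $y$, and then invoke the existence of left-limits, together with compactness of $[0,T]$, to show the construction terminates after finitely many steps.

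I would first introduce the oscillation functional $\omega_y(I) := \sup_{s, t \in I}|y_s - y_t|$ for an interval $I \subset [0,T]$, set $t_0 := 0$, and define recursively
\[
t_{i+1} := \sup\bigl\{s \in (t_i, T] : \omega_y([t_i, s)) < \epsilon/2\bigr\},
\]
stopping as soon as $t_i = T$. The right-continuity of $y$ at $t_i$ produces some $\delta > 0$ with $|y_s - y_{t_i}| < \epsilon/4$ for every $s \in [t_i, t_i + \delta)$, so the set in the supremum is non-empty and $t_{i+1} > t_i$. Since $s \mapsto \omega_y([t_i, s))$ is non-decreasing and left-continuous, a direct passage to the limit yields $\omega_y([t_i, t_{i+1})) \leq \epsilon/2 < \epsilon$, which is precisely the required oscillation bound on each partition interval. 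Working with threshold $\epsilon/2$ rather than $\epsilon$ is the trick that converts the weak inequality obtained by left-continuity into the strict inequality asked for in the statement.

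The main obstacle is to show that this recursion terminates in finitely many steps. Assume for contradiction that $(t_i)_{i \geq 0}$ is infinite and strictly increasing; being bounded above by $T$, it converges to some limit $\tau \leq T$. The left-limit $y_{\tau^-}$ exists, hence there exists $\eta > 0$ with $|y_s - y_{\tau^-}| < \epsilon/8$ for all $s \in (\tau - \eta, \tau)$, giving $\omega_y([\tau - \eta, \tau)) \leq \epsilon/4$. Choosing $i$ large enough that $t_i \in (\tau - \eta, \tau)$ forces $\omega_y([t_i, \tau)) \leq \epsilon/4 < \epsilon/2$, and therefore $t_{i+1} \geq \tau$ by the defining supremum, contradicting $t_{i+1} < \tau$. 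The recursion therefore reaches $T$ in finitely many steps; relabelling yields the desired partition $0 = t_0 < t_1 < \cdots < t_v = T$. Note that the convention $y_T = y_{T^-}$ built into $\mathbb{D}([0,T],\mathbb{R}^d)$ causes no issue since the final interval in the statement is $[t_{v-1}, t_v) = [t_{v-1}, T)$ and does not involve $y_T$.
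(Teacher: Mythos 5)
Your proof is correct. The paper itself offers no argument for this statement --- it is quoted verbatim from Billingsley \cite{Bill} --- and your greedy construction (take the supremum of right endpoints keeping the oscillation below a threshold, use right-continuity to advance, and use the existence of left limits to rule out accumulation of partition points) is precisely the standard proof given there, so there is nothing to compare it against within the paper. One hairline point: from $|y_s-y_{t_i}|<\epsilon/4$ on $[t_i,t_i+\delta)$ the triangle inequality only yields $\omega_y([t_i,t_i+\delta))\leq\epsilon/2$, not the strict inequality needed for non-emptiness of your defining set; replace $\epsilon/4$ by $\epsilon/8$, or simply observe that right-continuity forces $\omega_y([t_i,s))\to 0$ as $s\downarrow t_i$, and the step is airtight.
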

	\begin{remark}\label{rq2} The following results can be found in \cite{Bill}, and \cite{Delarue}.
		\begin{itemize}
			\item Note that $(\mathbb{D}([0,T],\mathbb{R}^{d}),d_{\mathbb{S}})$ is a  Polish space. It follows that $(\mathcal{P}_2(\mathbb{D}([0,T],\mathbb{R}^{d}),d_{\mathbb{S}}))$ is also a  Polish space.
			\item The metric $d_{\mathbb{S}}$ satisfies the following: for every   $x,y\in\mathbb{D}([0,T],\mathbb{R}^{d})$, we have $d_{\mathbb{S}}(x,y)\leq |x-y|_T$, which implies $W_{d_\mathbb{S}}\leq W_{T}$.
		\end{itemize}
	\end{remark}
	\section{Skorokhod reflection problem in a time-dependent convex domains: Deterministic case}\label{sec2}
	The Skorokhod reflection problem in the deterministic case is stated as follows:
	\begin{definition}\label{sk_det}
		Let  $\mathcal{D}=\{ D_t, t\in[0,T]\}$ be a family of time-dependent sets such that for every $t$,  $D_t\in\mathcal{C}$ and $t\to D_t$ is càdlàg with respect to Hausdorff metric. Let  $y\in\mathbb{D}([0,T],\mathbb{R}^d)$ such that
		$ y_{0}\in \mathrm{D}_0$. We say that a pair of functions $(x,k)\in\mathbb{D}([0,T],\mathbb{R}^{2d})$  is a solution to the reflection problem associated with $y$ if:
		\begin{itemize}	
			\item[(i)]  $x_t=y_t+k_t,\; \text{for every} \;t \in [0,T]$,
			\item[(ii)] $x \in \mathcal{D}$,
			\item[(iii)] For every càdlàg $z$, such that $z\in\mathcal{D}$, we have  for every $t\geq 0$
			$$\int_0^t \langle x_{s} - z_s,  \mathrm{d}k_s \rangle \leq 0.$$
		\end{itemize}
		This problem will be denoted by $RP_{\mathcal{D}}(y)$.
	\end{definition}
	\begin{remark}[\cite{slom15}]
		Note that the point (iii) in the definition \ref{sk_det} implies the existence of a unit vector $\eta$ such that $\mathrm{d}k_s=\eta_s \mathrm{d}|k|_s$ with $\eta_s\in \mathcal{N}_{x_s}$ if $x_s\in \partial D_s$.
	\end{remark}
	We establish the following a priori estimate for the solutions of problem $SP(y)$.
	\begin{lemma}\label{estim}
		Let  $\mathcal{D}=\{ D_t, t\in[0,T]\}$ and $\mathcal{\tilde{D}}=\{ \tilde{D}_t, t\in[0,T]\}$ be  two families of time-dependent sets such that for every $t\geq 0$, $D_t$ and $\tilde{D}_t$ belong to $\mathcal{C}$, $t\to D_t$ and $t\to \tilde{D}_t$ are càdlàg with respect to Hausdorff metric, $D_T=D_{T^-}$, and $\tilde{D}_T=\tilde{D}_{T^-}$. Let $y$ and $\tilde{y}$ be elements of $\mathbb{D}([0,T],\mathbb{R}^d)$ such that $y_0\in D_0$ and $\tilde{y}_0\in\tilde{D}_0$. Let $(x,k)$ and $(\tilde{x},\tilde{k})$ be the solutions associated to $SP_\mathcal{D}(y)$ and $SP_\mathcal{\tilde{D}}(\tilde{y})$, respectively.
		Then, for every $t\geq 0$, we have
		\begin{equation}
			|x_t-\tilde{x}_{t}|^2\leq |y_t-\tilde{y}_{t}|^2 + 2\sup_{s\in [0,t]} d_H(D_s,\tilde{D}_s)(|k|_t+|\tilde{k}|_t)+2\int_{0}^{t}\langle y_t-y_s+\tilde{y}_s-\tilde{y}_t,\mathrm{d}(k_s-\tilde{k}_s) \rangle.\label{ineq_estim}
		\end{equation}
	\end{lemma}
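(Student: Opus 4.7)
The strategy is to expand $|x_t - \tilde{x}_t|^2$, isolate the remainder integral that already appears on the right-hand side, and use the minimality condition (iii) of the Skorokhod problem to control what remains. Starting from $x = y + k$ and $\tilde{x} = \tilde{y} + \tilde{k}$, I write
\begin{equation*}
|x_t - \tilde{x}_t|^2 = |y_t - \tilde{y}_t|^2 + 2\langle y_t - \tilde{y}_t, k_t - \tilde{k}_t\rangle + |k_t - \tilde{k}_t|^2,
\end{equation*}
and then rewrite $k_t - \tilde{k}_t = \int_0^t d(k_s - \tilde{k}_s)$; inserting $\pm(y_s - \tilde{y}_s)$ under the integral yields
\begin{equation*}
\langle y_t - \tilde{y}_t, k_t - \tilde{k}_t\rangle = \int_0^t \langle y_t - y_s + \tilde{y}_s - \tilde{y}_t,\, d(k_s - \tilde{k}_s)\rangle + \int_0^t \langle y_s - \tilde{y}_s,\, d(k_s - \tilde{k}_s)\rangle,
\end{equation*}
where the first term is exactly the remainder in the claimed inequality.

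Next I substitute $y_s - \tilde{y}_s = (x_s - \tilde{x}_s) - (k_s - \tilde{k}_s)$ into the second integral. The contribution of $-(k_s - \tilde{k}_s)$ combines with $|k_t - \tilde{k}_t|^2$ via the elementary identity $|k_t - \tilde{k}_t|^2 \leq 2\int_0^t \langle k_s - \tilde{k}_s,\, d(k_s - \tilde{k}_s)\rangle$, which follows from integration by parts for a càdlàg finite-variation process, the gap being the non-negative jump term $\sum_{s\leq t}|\Delta(k_s - \tilde{k}_s)|^2$. Consequently these two pieces combine to a non-positive quantity that can be discarded, and it only remains to bound $2\int_0^t\langle x_s - \tilde{x}_s,\, d(k_s - \tilde{k}_s)\rangle$. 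For the $dk$-part, I set $z_s := \Pi_{D_s}(\tilde{x}_s)$; since the convex projection is Lipschitz in its argument and continuous in the set for the Hausdorff metric, $z$ inherits the càdlàg regularity of $\tilde{x}$ and of $s\mapsto D_s$ and belongs to $\mathcal{D}$, hence is an admissible test process in condition (iii) for $(x,k)$. This gives $\int_0^t\langle x_s - z_s,\, dk_s\rangle \leq 0$, so
\begin{equation*}
\int_0^t\langle x_s - \tilde{x}_s,\, dk_s\rangle \leq \int_0^t |z_s - \tilde{x}_s|\, d|k|_s \leq \sup_{s\in [0,t]} d_H(D_s,\tilde{D}_s)\cdot |k|_t,
\end{equation*}
since $|z_s - \tilde{x}_s| = \operatorname{d}(\tilde{x}_s, D_s) \leq d_H(D_s,\tilde{D}_s)$. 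A symmetric argument with $\tilde{z}_s := \Pi_{\tilde{D}_s}(x_s)$ and condition (iii) applied to $(\tilde{x},\tilde{k})$ produces the analogous bound with $|\tilde{k}|_t$. Summing these two bounds reproduces the claimed estimate.

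The step I expect to be the main obstacle is the justification that the test process $z_s = \Pi_{D_s}(\tilde{x}_s)$ is càdlàg, and hence admissible in (iii). This relies on the joint continuity of the convex projection $(D,y)\mapsto \Pi_D(y)$ when $D$ varies in $\mathcal{C}$ under $d_H$, a standard but not entirely trivial fact that should be stated explicitly. A secondary technical point is fixing the convention (left versus right values of the integrand) in the Stieltjes integrals against $dk$ and $d\tilde{k}$, so that the integration-by-parts identity and condition (iii) can be combined coherently at jump times without producing spurious sign contributions.
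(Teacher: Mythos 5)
Your proof is correct and follows essentially the same route as the paper's: the same expansion yielding the intermediate inequality with the cross term $2\int_0^t\langle x_s-\tilde{x}_s,\mathrm{d}(k_s-\tilde{k}_s)\rangle$, and the same device of inserting the projections $\Pi_{D_s}(\tilde{x}_s)$ and $\Pi_{\tilde{D}_s}(x_s)$ to bound that term by $\sup_{s\in[0,t]}d_H(D_s,\tilde{D}_s)(|k|_t+|\tilde{k}|_t)$. The only cosmetic difference is that the paper invokes the representation $\mathrm{d}k_s=\eta_s\,\mathrm{d}|k|_s$ with $\eta_s\in\mathcal{N}_{x_s}$ on $\{x_s\in\partial D_s\}$ together with the pointwise normal-cone inequality, which sidesteps the c\`adl\`ag-admissibility of the projection as a test process in (iii) --- precisely the point you correctly flag as the delicate step in your version.
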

	\begin{proof}
		Let $t\geq 0$,	we apply the change of variable formula to $k-\tilde{k}$, we get:
		$$|x_t-\tilde{x}_{t}|^2\leq |y_t-\tilde{y}_{t}|^2 + 2\int_{0}^{t}\langle x_s-\tilde{x}_s,\mathrm{d}(k_s-\tilde{k}_s)\rangle+2\int_{0}^{t}\langle y_t-y_s+\tilde{y}_s-\tilde{y}_t,\mathrm{d}(k_s-\tilde{k}_s) \rangle.$$
		We have,
		\begin{align*}
			\int_{0}^{t}\langle x_s-\tilde{x}_s,\mathrm{d}(k_s-\tilde{k}_s)\rangle&= \int_{0}^{t}\langle x_s-\Pi_{D_s}(\tilde{x}_s)+\Pi_{D_s}(\tilde{x}_s)-\tilde{x}_s,\eta_s\rangle\mathds{1}_{\{x_s\in\partial D_s\}}\mathrm{d}|k|_s \\
			&- \int_{0}^{t}\langle x_s-\Pi_{\tilde{D}_s}(x_s)+\Pi_{\tilde{D}_s}(x_s)-\tilde{x}_s,\tilde{\eta}_s\rangle\mathds{1}_{\{\tilde{x}_s\in\partial \tilde{D}_s\}}\mathrm{d}|\tilde{k}|_s.
		\end{align*}
		By (iii) in Definition \ref{sk_det}, 
		$$ \int_{0}^{t}\langle x_s-\Pi_{D_s}(\tilde{x}_s),\eta_s\rangle \mathds{1}_{\{x_s\in\partial D_s\}}\mathrm{d}|k|_s\leq 0 \quad \text{and}\quad \int_{0}^{t}\langle \tilde{x}_s-\Pi_{\tilde{D}_s}(x_s),\tilde{\eta}_s\rangle \mathds{1}_{\{\tilde{x}_s\in\partial \tilde{D}_s\}}\mathrm{d}|\tilde{k}|_s\leq 0.$$
		It follows that,
		\begin{align*}
			\int_{0}^{t}\langle x_s-\tilde{x}_s,\mathrm{d}(k_s-\tilde{k}_s)\rangle& \leq \int_{0}^{t}\langle \Pi_{D_s}(\tilde{x}_s)-\tilde{x}_s,\eta_s\rangle\mathds{1}_{\{x_s\in\partial D_s\}}\mathrm{d}|k|_s+  \int_{0}^{t}\langle \Pi_{\tilde{D}_s}(x_s)-x_s,\tilde{\eta}_s\rangle\mathds{1}_{\{\tilde{x}_s\in\partial \tilde{D}_s\}}\mathrm{d}|\tilde{k}|_s\\
			&\leq \int_{0}^{t} |\Pi_{D_s}(\tilde{x}_s)-\tilde{x}_s|\mathds{1}_{\{x_s\in\partial D_s\}}\mathrm{d}|k|_s + \int_{0}^{t} |\Pi_{\tilde{D}_s}(x_s)-x_s|\mathds{1}_{\{x_s\in\partial \tilde{D}_s\}}\mathrm{d}|\tilde{k}|_s\\
			&\leq  \int_{0}^{t} d(\tilde{x}_s,D_s)\mathrm{d}|k|_s + \int_{0}^{t} d(x_s,\tilde{D}_s)\mathrm{d}|\tilde{k}|_s\\
			&\leq \sup_{s \in[0,t]} d_H(D_s,\tilde{D}_s)(|k|_t+|\tilde{k}|_t).
		\end{align*}
		Subsequently, we obtain:
		$$|x_t-\tilde{x}_{t}|^2\leq |y_t-\tilde{y}_{t}|^2 + 2\sup_{s\in [0,t]} d_H(D_s,\tilde{D}_s)(|k|_t+|\tilde{k}|_t)+2\int_{0}^{t}\langle y_t-y_s+\tilde{y}_s-\tilde{y}_t,\mathrm{d}(k_s-\tilde{k}_s) \rangle.$$
		This completes the proof.
	\end{proof}
	\begin{proposition}\label{est_var}
		Let	$\mathcal{D}^n=\{ D_t^n, t\in[0,T]\}$ and $\mathcal{D}=\{ D_t, t\in[0,T]\}$ be families of time-dependent sets such that for every $t\geq 0$ and $n\in\mathbb{N}$, $D_t^n,D_t\in\mathcal{C}$, and $t\to D_t^n$, $t\to D_t$ are càdlàg with respect to Hausdorff metric, $D_T^n=D_{T^-}^n$, $D_T=D_{T^-}$. Let $\left\{y^n\right\}$ and $\left\{a^n\right\}$ be elements of $\mathbb{D}\left([0,T], \mathbb{R}^d\right)$ such that $y_0^n \in D_0^n$ and $a^n\in \mathring{\mathcal{D}^n}$. Let $\left\{\left(x^n, k^n\right)\right\}$ be a sequence of solutions of the Skorokhod problem $SP_{\mathcal{D}^n}(y^n)$. We assume that $\inf_{t\in[0,T]}d(a^n_t,\partial D_t^n)\geq m$, where $m$  is a positive constant independent of $n$, and $\lim\limits_{n}(\sup _{t\in [0,T]} d_H(D_t^n, D_t)+ \sup _{t\in [0,T]}|y_t^n- y_t|+ \sup _{t\in [0,T]}|a_t^n- a_t|)=0$. Then, there exists $C>0$ and $N\in\mathbb{N}$ such that for every $n\geq N$, we have
		\begin{equation}
			|k^n|_{T} \leq C( \sup_{t\in [0,T]}|y_t|^2+ \sup_{t\in [0,T]}|a_t|^2 +1). \label{estim_k_n}
		\end{equation}
	\end{proposition}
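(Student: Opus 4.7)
The strategy is to exploit the interior-point buffer $m$ to bound $|k^n|_T$ by an integral of the form $-\int_0^T \langle x^n - a^n, \mathrm{d}k^n\rangle$, then control that integral via a partition argument combined with Abel summation by parts, so as to avoid the circular estimate that direct Cauchy--Schwarz would produce.

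\emph{Step 1: interior-point inequality.} Since $\mathrm{d}k^n_s = \eta^n_s \mathrm{d}|k^n|_s$ with $\eta^n_s \in \mathcal{N}_{x^n_s}$ on the support of $\mathrm{d}|k^n|$, and $\mathrm{dist}(a^n_s, \partial D^n_s) \geq m$, Remark \ref{convex}(ii) gives $\langle x^n_s - a^n_s, \eta^n_s\rangle \leq -m$ there. Integrating yields $\int_0^T \langle x^n_s - a^n_s, \mathrm{d}k^n_s\rangle \leq -m|k^n|_T$. Writing $x^n = y^n + k^n$ (with $k^n_0 = 0$ since $y^n_0 \in D^n_0$) and applying the change-of-variable formula to $|k^n_s|^2$, one gets $\int_0^T \langle k^n_s, \mathrm{d}k^n_s\rangle = \tfrac12 |k^n_T|^2 + \tfrac12 \sum_{s\leq T}|\Delta k^n_s|^2 \geq 0$, so that the key inequality reduces to
\[
m|k^n|_T \;\leq\; -\int_0^T \langle y^n_s - a^n_s, \mathrm{d}k^n_s\rangle.
\]

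\emph{Step 2: uniform partition and summation by parts.} Fix $\epsilon>0$. Apply Proposition \ref{bill_r} to the limit paths $y$ and $a$ to choose a finite partition $0=t_0<t_1<\cdots<t_v=T$ (with $v$ depending only on $\epsilon, y, a$) on whose half-open intervals $[t_{i-1},t_i)$ the oscillations of $y$ and $a$ are at most $\epsilon$. Uniform convergence $|y^n-y|_T+|a^n-a|_T\to 0$ then ensures that the oscillations of $y^n$ and $a^n$ on each $[t_{i-1},t_i)$ are at most $3\epsilon$ for all $n\geq N$. Replacing $y^n_s-a^n_s$ by its left-point piecewise-constant interpolant gives
\[
-\int_0^T \langle y^n_s - a^n_s, \mathrm{d}k^n_s\rangle = -\sum_{i=1}^{v}\langle y^n_{t_{i-1}} - a^n_{t_{i-1}}, k^n_{t_i} - k^n_{t_{i-1}}\rangle + O(\epsilon)\cdot |k^n|_T.
\]
Abel's summation by parts recasts the telescoping sum as a quantity involving only $k^n$ evaluated at the partition points multiplied by differences of $y^n-a^n$ there; its modulus is bounded by $2v(|y^n|_T+|a^n|_T)\sup_s |k^n_s|$.

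\emph{Step 3: uniform bound on $\sup |k^n|$ and conclusion.} Because $x^n_s\in D^n_s\subset B(0,r^n_s)$ and the Hausdorff convergence $D^n\to D$ together with the boundedness of the càdlàg family $\{D_t\}$ yield a constant $R$ (independent of $n$ for $n\geq N$) with $r^n_s\leq R$, one has $\sup_s|k^n_s|\leq R+|y^n|_T$. Assembling everything,
\[
m|k^n|_T \;\leq\; C\epsilon\,|k^n|_T + 2v\,(|y^n|_T+|a^n|_T)(R+|y^n|_T).
\]
Picking $\epsilon$ so small that $C\epsilon\leq m/2$ absorbs the circular term into the left-hand side, and using $|y^n|_T\leq |y|_T+1$, $|a^n|_T\leq |a|_T+1$ for $n\geq N$, a final Young-type estimate on the right-hand side delivers (\ref{estim_k_n}).

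\emph{Main obstacle.} The delicate point is avoiding the circular bound $m|k^n|_T\leq(|y^n|_T+|a^n|_T)|k^n|_T$ that a direct Cauchy--Schwarz would produce; this forces the use of the partition together with Abel's identity, and one must take care that the partition can be chosen independently of $n$ (obtained by applying Proposition \ref{bill_r} to the limits $y, a$ and then using uniform convergence), and that the atoms of $\mathrm{d}k^n$ at partition endpoints are correctly absorbed into either the oscillation term or the summation-by-parts boundary term.
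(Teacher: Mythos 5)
Your argument is correct, and it rests on the same two pillars as the paper's proof: the interior-point inequality $\langle x^n_s-a^n_s,\eta^n_s\rangle\le -m$ supplied by Remark \ref{convex}(ii), and the partition of Proposition \ref{bill_r} applied to the limit paths $y,a$ and transferred to $y^n,a^n$ by uniform convergence. Where you genuinely differ is in how the estimate is closed. The paper works interval by interval: it applies the change-of-variable formula to $k^n_\cdot-k^n_{t_{i-1}}$ on each $[t_{i-1},t_i]$, keeps the squared increment $|k^n_t-k^n_{t_{i-1}}|^2$ on the left-hand side, budgets the $m$-buffer between the oscillation bound \eqref{ineq_bill} and the choice of $N$, and closes each local estimate with Young's inequality before summing over $i$. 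You instead derive one global inequality $m|k^n|_T\le-\int_0^T\langle y^n_s-a^n_s,\mathrm{d}k^n_s\rangle$ (using $\int_0^T\langle k^n_s,\mathrm{d}k^n_s\rangle=\tfrac12|k^n_T|^2+\tfrac12\sum_{s\le T}|\Delta k^n_s|^2\ge0$), absorb only the $O(\epsilon)|k^n|_T$ oscillation term into the left-hand side, and control the remaining boundary and atom terms by the a priori bound $\sup_s|k^n_s|\le R+|y^n|_T$, which exploits $x^n_s\in D^n_s\subset B(0,r^n_s)$. This buys a cleaner argument with no Young-type absorption, at the cost of needing $R=\sup_{n\ge N}\sup_{t}r^n_t<\infty$ over the whole interval (justify it: a c\`adl\`ag map $t\mapsto D_t$ into $(\mathcal{C},d_H)$ has totally bounded range, so $\sup_t r_t<\infty$, and then use $\sup_t d_H(D^n_t,D_t)\to0$), whereas the paper only ever uses the radii at the finitely many partition points. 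Two details to tighten in a full write-up: the displayed identity in your Step 2 should carry the atom contributions at the partition points $t_i$ explicitly, since the oscillation bound holds only on the half-open intervals $[t_{i-1},t_i)$ (you flag this yourself, and bounding each atom by $2(|y^n|_T+|a^n|_T)\sup_s|k^n_s|$ merely changes the constant $2v$ to one of order $v$); and the reduction $m|k^n|_T\le-\int_0^T\langle y^n_s-a^n_s,\mathrm{d}k^n_s\rangle$ uses the convention $k^n_0=0$, which the paper's construction also adopts. Both routes produce a constant depending on $y$, $a$, $m$ and the radii of the domains, which is all that \eqref{estim_k_n} and its later use require.
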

	\begin{proof}
		Since $y^n$ and $a^n$ converge uniformly to $y$ and $a$, respectively, and $a^n\in\mathcal{D}$, it follows that $y$ and $a$ are càdlàg, and $a\in\mathcal{D}$. By proposition \ref{bill_r}, there exist  points $t_0, t_1, \ldots, t_v$ such that 
		$ 0=t_0<t_1<\cdots<t_v=T$ and
		\begin{equation}
			\sup_{t,s\in[t_{i-1}, t_i[}|a_t-a_s|+\sup_{t,s\in[t_{i-1}, t_i[}|y_t-y_s|<\frac{m}{4}, \quad i=1,2, \ldots, v. \label{ineq_bill}
		\end{equation}
		Let $i\in\{1\dots v\}$ and $t\in[t_{i-1},t_i]$. We apply the change of variable formula  to the function $k_.^n-k_{t_{i-1}}^n$, we get
		\begin{align*}
			|k_t^n-k_{t_{i-1}}^n|^2&\leq 2 \int_{t_{i-1}}^t\langle x_u^n-x_{t_{i-1}}^n,\mathrm{d}k_u^n\rangle - 2\int_{t_{i-1}}^t\langle y_u^n-y_{t_{i-1}}^n,\mathrm{d}k_u^n\rangle\\ 
			&\leq 2\int_{t_{i-1}}^t\langle x_u^n-a_u^n,\mathrm{d}k_u^n\rangle + 2\int_{t_{i-1}}^t\langle a_u^n-a_{u},\mathrm{d}k_u^n\rangle+ 2\int_{t_{i-1}}^t\langle a_u-a_{t_{i-1}},\mathrm{d}k_u^n\rangle\\
			& + 2\int_{t_{i-1}}^t\langle a_{t_{i-1}}-x_{t_{i-1}}^n,\mathrm{d}k_u^n\rangle - 2\int_{t_{i-1}}^t\langle y_u^n-y_{u},\mathrm{d}k_u^n\rangle- 2\int_{t_{i-1}}^t\langle y_u-y_{t_{i-1}},\mathrm{d}k_u^n\rangle\\
			& - 2\int_{t_{i-1}}^t\langle y_{t_{i-1}}-y_{t_{i-1}}^n,\mathrm{d}k_u^n\rangle,\\
		\end{align*}
		where in the last inequality, we have add and subtract $a^n$, $a-a_{t_{i-1}}$ and $y-y_{t_{i-1}}$. 
		
		Now, we use (ii) of Remark \ref{convex}, and we apply the Cauchy-Schwartz inequality to the second, fifth, and seventh integrals. We split the third and sixth integrals into integrals over the intervals $[t_{i-1}, t[$ and $\{t\}$. In the fourth integral, we add and subtract $\Pi_{D_{t_{i-1}}}(x^n_{t_{i-1}})$. By doing so, we obtain
		\begin{equation*}
			\begin{split}
				|k_t^n-k_{t_{i-1}}^n|^2&\leq -  2\int_{t_{i-1}}^t d(a_u^n,\partial D_u^n)\mathrm{d}|k^n|_u + 2 \sup_{s\in [0,T]}|a_s^n-a_s||k^n|_{[t_{i-1},t_{i}]}+ 2 \int_{[t_{i-1},t[}\langle a_u-a_{t_{i-1}},\mathrm{d}k_u^n\rangle\\
				&+2\langle a_t-a_{t_{i-1}}, \Delta k_{t}^n \rangle+2\langle a_{t_{i-1}}-\Pi_{D_{t_{i-1}}}(x^n_{t_{i-1}}),k_t^n-k_{t_{i-1}}^n \rangle + 2\langle \Pi_{D_{t_{i-1}}}(x^n_{t_{i-1}}) -x_{t_{i-1}}^n,k_t^n-k_{t_{i-1}}^n\rangle\\
				& +4\sup_{s\in [0,T]}|y_s^n-y_s||k^n|_{[t_{i-1},t_{i}]} - 2\int_{[t_{i-1},t[}\langle y_u-y_{t_{i-1}},\mathrm{d}k_u^n\rangle -2\langle y_t-y_{t_{i-1}}, \Delta k_{t}^n \rangle\\
				&\leq -  2\int_{t_{i-1}}^t d(a_u^n,\partial D_u^n)\mathrm{d}|k^n|_u +  2 \sup_{s\in [0,T]}|a_s^n-a_s||k^n|_{[t_{i-1},t_{i}]}+ 2\sup_{t\in[t_{i-1,t_i}[}|a_t-a_{t_{i-1}}||k^n|_{[t_{i-1},t_{i}]} \\
				&+ 8 \sup_{s\in [0,T]}|a_s|\sup_{t\in[t_{i-1},t_{i}]}|k_t^n-k_{t_{i-1}}^n|+ 4r_{t_{i-1}}\sup_{t\in[t_{i-1},t_{i}]}|k_t^n-k_{t_{i-1}}^n|+2 \sup_{t\in [0,T]}d_H(D_t^n,D_t)|k^n|_{[t_{i-1},t_{i}]}\\
				&  +4\sup_{s\in[0,T]}|y_s^n-y_s||k^n|_{[t_{i-1},t_{i}]} + 2\sup_{t\in[t_{i-1,t_i}[}|y_t-y_{t_{i-1}}||k^n|_{[t_{i-1},t_{i}]}+ 8 \sup_{s\in [0,T]}|y_s|\sup_{t\in[t_{i-1},t_{i}]}|k_t^n-k_{t_{i-1}}^n|.
			\end{split}
		\end{equation*}
		According to the assumptions, there exists $N$ such that for every $n\geq N$, 
		$$2\sup_{s\in[0,T]}|a_s^n-a_s|+2\sup_{t\in[0,T]}d_H(D_t^n,D_t)+ 4\sup_{s\in[0,T]}|y_s^n-y_s|\leq \frac{m}{2}.$$
		Combining this with  \eqref{ineq_bill}, we obtain,
		\begin{multline*}
			2\int_{t_{i-1}}^t d(a_u^n,\partial D_u^n)\mathrm{d}|k^n|_u+|k_t^n-k_{t_{i-1}}^n|^2\leq\\ m|k^n|_{[t_{i-1},t_{i}]}+ 8(\sup_{s\in[0,T]}|a_s|+\sup_{s\in[0,T]}|y_s|+ +\frac{1}{2}r_{t_{i-1}})\sup_{t\in[t_{i-1},t_{i}]}|k_t^n-k_{t_{i-1}}^n|.
		\end{multline*}
		We use the inequality $2|ab|\leq (\frac{1}{2}|a|^2+2|b|^2)$, we get
		\begin{equation*}
			\begin{split}
				m|k^n|_{[t_{i-1},t_{i}]} +	\sup_{t\in[t_{i-1},t_{i}]} |k_t^n-k_{t_{i-1}}^n|^2&\leq \frac{1}{2}\sup_{t\in[t_{i-1},t_{i}]} |k_t^n-k_{t_{i-1}}^n|^2 + 64(\sup_{s\in[0,T]}|a_s|^2+\sup_{s\in[0,T]}|y_s|^2+\frac{1}{4}r_{t_{i-1}}^2).
			\end{split}
		\end{equation*}
		By setting  $c=\max\Big(64,\frac{1}{4}\max_{\{i=1,\dots v+1\}}(r_{t_{i-1}}^2)\Big)$,  we obtain
		\begin{equation*}
			|k^n|_{[t_{i-1},t_{i}]}\leq  c(\sup_{s\in[0,T]}|y_s|^2+ \sup_{s\in[0,T]}|a_s|^2 +1).
		\end{equation*}
		We sum on $i$, which is a finite sum, we get
		\begin{equation*}
			|k^n|_{T} \leq C(\sup_{s \in[0,T]}|y_s|^2 + \sup_{s \in[0,T]}|a_s|^2 +1),
		\end{equation*}
		where $C=c(v+1)$. 
		
		This completes the proof.
	\end{proof}
	\begin{lemma}\label{disc}
		Let  $\mathcal{D}=\{ D_t, t\in[0,T]\}$ be a family of time-dependent sets such that for every $t$,  $D_t\in\mathcal{C}$ and $t\to D_t$ is càdlàg with respect to Hausdorff metric such that $D_T=D_{T^-}$. Let $y$ and $a$ be elements of  $\mathbb{D}([0,T],\mathbb{R}^d)$. For  $n\geq 0$, we consider the following recursive sequence of times defined by:
		$t_0^n=0$ and  for $i\geq 1$,
		$$t_i^n=(t_{i-1}^n+1/n) \land \inf\{ t>t_{i-1}^n \quad  d_H(D_{t_{i-1}^n}, D_t) \geq \frac{1}{n} \;\;\text{or}\;\; |y_{t_{i-1}^n}-y_t|\geq \frac{1}{n}\;\;\text{or}\;\; |a_{t_{i-1}^n}-a_t|\geq \frac{1}{n}\}\land T.$$
		Then, we have
		\begin{center}
			For every $n\geq 1$, there exists $i_n$ such that for every $i>i_n$ $t_{i}^n=T$.
		\end{center}
		In addition, if we consider the  discretization of $y$, $a$ and $\mathcal{D}$ given by the following:
		$$D_t^n= \begin{cases}D_{t_{i-1}^n}, & t \in[t_{i-1}^n, t_i^n[, i=1, \ldots, i_n, \\ 
			D_{t_{i_{n}}^n}, & t \in\left[t_{i_{n}}^n, T\right] .\end{cases}$$
		$$y^n_t= \begin{cases}y_{t_{i-1}^n}, & t \in[t_{i-1}^n, t_i^n[, i=1, \ldots, i_n, \\ 
			y_{t_{i_{n}}^n}, & t \in\left[t_{i_{n}}^n, T\right] .\end{cases}$$
		$$a^n_t= \begin{cases}a_{t_{i-1}^n}, & t \in[t_{i-1}^n, t_i^n[, i=1, \ldots, i_n, \\ 
			a_{t_{i_{n}}^n}, & t \in\left[t_{i_{n}}^n, T\right] .\end{cases}$$
		We have,
		$$\lim\limits_{n}\sup _{0\leq t \leq T} d_H(D_t^n, D_t)=0, \;\;
		\lim\limits_{n}\sup _{0\leq t \leq T} |y_t^n- y_t|=0, \;\;\text{and}\;\;\lim\limits_{n}\sup _{0\leq t \leq T} |a_t^n- a_t|=0.$$
	\end{lemma}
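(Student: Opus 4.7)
The plan is to handle the two claims separately. The finiteness assertion will follow from a Billingsley-type partition argument applied simultaneously to $D$, $y$, and $a$, while the uniform convergence is essentially immediate from the definition of the stopping times $t_i^n$.

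For the first assertion, I would first note that Proposition \ref{bill_r} extends verbatim to càdlàg functions valued in any metric space, since its proof uses only compactness of $[0,T]$ and the existence of right and left limits; in particular, it applies to the càdlàg map $t \mapsto D_t$ valued in $(\mathcal{C}, d_H)$. Fix $n \ge 1$ and take $\epsilon = 1/(2n)$. Applying this to $y$, $a$, and $D$ and forming the common refinement yields a finite partition $0 = s_0 < s_1 < \cdots < s_v = T$ on which the oscillations of $y$, $a$, and $D$ are each strictly less than $1/(2n)$ on every $[s_{j-1}, s_j)$. Now fix $j$ and suppose $t_{i-1}^n \in [s_{j-1}, s_j)$. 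By the triangle inequality, for any $t \in [s_{j-1}, s_j)$ with $t > t_{i-1}^n$ we have $d_H(D_{t_{i-1}^n}, D_t) < 1/(2n) < 1/n$, and similarly for $y$ and $a$; hence the triggering set in the definition of $t_i^n$ is empty inside $[s_{j-1}, s_j)$, and therefore either $t_i^n = t_{i-1}^n + 1/n$ or $t_i^n \ge s_j$. In particular, the indices $i$ with $t_{i-1}^n \in [s_{j-1}, s_j)$ are spaced by exactly $1/n$ until one of them escapes the interval, which produces at most $\lceil n(s_j - s_{j-1}) \rceil + 1$ such indices. Summing over $j = 1, \ldots, v$ gives a finite bound on the total number of steps, so there exists $i_n$ with $t_{i_n}^n = T$; the recursive definition then forces $t_i^n = T$ for all $i > i_n$.

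For the uniform convergence, the construction is engineered exactly for this. Let $t \in [0, T]$. If $t \in [t_{i-1}^n, t_i^n)$ for some $i \le i_n$, then by the very definition of $t_i^n$ as an infimum with strict inequalities on $(t_{i-1}^n, t_i^n)$, we have $d_H(D_{t_{i-1}^n}, D_t) < 1/n$, $|y_{t_{i-1}^n} - y_t| < 1/n$, and $|a_{t_{i-1}^n} - a_t| < 1/n$. Since $D_t^n = D_{t_{i-1}^n}$, $y_t^n = y_{t_{i-1}^n}$, and $a_t^n = a_{t_{i-1}^n}$ on $[t_{i-1}^n, t_i^n)$, the bound $1/n$ transfers to $d_H(D_t^n, D_t)$, $|y_t^n - y_t|$, and $|a_t^n - a_t|$. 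For $t \in [t_{i_n}^n, T]$ the first part gives $t_{i_n}^n = T$, so this range reduces to $\{T\}$, where the three quantities vanish by the assumption $D_T = D_{T^-}$ and the analogous convention on $y$, $a$ (endpoints of the càdlàg space $\mathbb{D}([0,T],\mathbb{R}^d)$). Taking the supremum over $t$ gives uniform bounds of order $1/n$, which tend to zero.

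The main obstacle is the finiteness part, specifically justifying the Billingsley-type partition for the $(\mathcal{C}, d_H)$-valued trajectory and combining the three partitions carefully enough that the pigeonhole estimate is clean; once that common refinement is in hand, both parts of the lemma reduce to direct bookkeeping.
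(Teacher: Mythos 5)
Your proof is correct, but it is organized quite differently from the paper's. For the finiteness of the number of steps, the paper simply asserts that the claim ``follows immediately'' from the c\`adl\`ag property; your argument via the metric-space version of Proposition \ref{bill_r} applied to $y$, $a$ and $t\mapsto D_t$ in $(\mathcal{C},d_H)$, followed by the common refinement and the pigeonhole count of steps of length exactly $1/n$ inside each $[s_{j-1},s_j)$, is a complete and legitimate way to supply the missing details (the extension of Proposition \ref{bill_r} to metric-space-valued c\`adl\`ag maps is indeed standard, using only right limits, left limits and compactness of $[0,T]$; note also that your dichotomy ``$t_i^n=t_{i-1}^n+1/n$ or $t_i^n\geq s_j$'' automatically gives strict increase of the $t_i^n$ below $T$, which is needed for the count to terminate). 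For the uniform convergence, the paper argues by contradiction: it extracts a sequence $t_n\to t$ with $d_H(D_{t_n}^n,D_{t_n})\not\to 0$, sets $s_n=\max\{t_i^n: t_i^n\leq t_n\}$, and splits into three cases according to whether $t<s_n$, $s_n\leq t\leq t_n$, or $t_n<t$, using right-continuity and left limits at $t$. Your direct observation that for $t\in[t_{i-1}^n,t_i^n)$ one has $t<\inf\{s>t_{i-1}^n:\dots\}$, hence all three oscillation quantities are $<1/n$, yields the stronger quantitative bound $\sup_t d_H(D_t^n,D_t)\leq 1/n$ (and likewise for $y$, $a$) and is arguably cleaner; the paper itself implicitly uses this bound in its second case. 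The only imprecision is at the endpoint: the lemma only asserts $t_i^n=T$ for $i>i_n$, so a priori $t_{i_n}^n$ may be strictly less than $T$ and the last piece $[t_{i_n}^n,T]$ need not reduce to $\{T\}$; this is harmless, since on $[t_{i_n}^n,T)=[t_{i_n}^n,t_{i_n+1}^n)$ the same oscillation bound applies, and at $t=T$ one passes to the left limit and uses $D_T=D_{T^-}$, $y_T=y_{T^-}$, $a_T=a_{T^-}$ to get $d_H(D_{t_{i_n}^n},D_T)\leq 1/n$.
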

	\begin{proof}
		\begin{itemize}
			\item[i)] The first point follows immediately from the fact that $y,a $ and $\mathcal{D}$ are càdlàg.
			\item[ii)] Assume that there exists a sequence $(t_n)$ in $[0,T]$ such that $\lim\limits_{n} t_n=t$ and $\lim\limits_{n}d_H(D_{t_n}^n,D_{t_n})\neq 0$.
			Setting $s_n=\max\{t_i^n , t_i^n\leq t_n\}$ implies that, for every $n\geq 1$, there exists $j_n$ such that $s_n=t_{j_n}^n$, 
			$D_{t_n}^n=D_{s_n}$ and $s_n\leq t_n< t_{{j_n}+1}^n\leq s_n+\frac{1}{n}$,
			hence $\lim\limits_{n} s_n=t$.
			We distinguish the following cases:
			\begin{itemize}
				\item[\textbf{case 1:}] If $\{n: t<s_n\}$ is infinite, we have
				$$0\leq d_H(D_{t_n}^n,D_{t_n})=d_H(D_{s_n},D_{t_n})\leq d_H(D_{s_n},D_{t})+ d_H(D_{t},D_{t_n}).$$
				The last terms in the previous inequality tend to 0, which is a contradiction.
				\item[\textbf{case 2:}]  If $\{n: s_n\leq t\leq t_n\}$ is infinite, then
				$$0\leq d_H(D_{t_n}^n,D_{t_n})\leq d_H(D_{s_n},D_{t})+ d_H(D_{t},D_{t_n}) \leq \frac{1}{n} +d_H(D_{t},D_{t_n}),$$
				which also tend to 0. It is also a contradiction.
				\item[\textbf{case 3:}]  If $\{n: t_n< t\}$ is infinite, then
				$$0\leq d_H(D_{t_n}^n,D_{t_n})\leq d_H(D_{s_n},D_{t^-})+ d_H(D_{t^-},D_{t_n}),$$
				which is also a contradiction.
				\item[iii)] Using the fact that $y$ and $a$ are càdlàg, the convergence of $y^n$ and $a^n$ follows similarly as (ii).
			\end{itemize}
			
		\end{itemize}
	\end{proof}
	\begin{theorem}
		Let  $\mathcal{D}=\{ D_t, t\in[0,T]\}$ be a family of time-dependent sets such that for every $t$,  $D_t\in\mathcal{C}$,  and $t\to D_t$ is càdlàg with respect to  Hausdorff metric. Let $y$ and $a$ be elements of  $\mathbb{D}([0,T],\mathbb{R}^d)$ such that $ y_{0}\in D_0$, $a\in \mathring{\mathcal{D}}$, and $\inf_{t\in[0,T]}d(a_t,\partial D_t)>0$.
		Then, there exists a unique solution of the problem $SP_{\mathcal{D}}(y)$. In addition,
		
		\begin{equation}
			|k|_{T} \leq C( \sup_{s\in [0,T]}|y_s|^2+ \sup_{s\in [0,T]}|a_s|^2 +1). \label{estim_k_}
		\end{equation}
	\end{theorem}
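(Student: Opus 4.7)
The plan is to prove uniqueness as an immediate consequence of Lemma~\ref{estim}, and to prove existence by a discretization procedure: approximate the data by piecewise constant functions via Lemma~\ref{disc}, solve each discrete problem explicitly, and pass to the limit using the a priori bound of Proposition~\ref{est_var} together with another application of Lemma~\ref{estim}.

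For uniqueness, apply Lemma~\ref{estim} with $\tilde{\mathcal{D}} = \mathcal{D}$ and $\tilde{y} = y$: on the right-hand side of~\eqref{ineq_estim} the first two terms vanish, and the integrand $y_t - y_s + y_s - y_t$ is identically zero, so $x = \tilde{x}$ and hence $k = \tilde{k}$.

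For existence, apply Lemma~\ref{disc} to the triple $(y,a,\mathcal{D})$ to obtain piecewise constant approximations $(y^n, a^n, \mathcal{D}^n)$ with the uniform convergences stated there. On each constancy interval $[t_{i-1}^n, t_i^n[$ both $y^n$ and $D^n$ are constant, so the problem $SP_{\mathcal{D}^n}(y^n)$ admits the explicit recursive solution
$$x^n_0 = y_0, \quad x^n_t = x^n_{t_{i-1}^n} \text{ on } [t_{i-1}^n, t_i^n[, \quad x^n_{t_i^n} = \Pi_{D_{t_i^n}}\!\bigl(x^n_{(t_i^n)^-} + \Delta y^n_{t_i^n}\bigr),$$
with $k^n := x^n - y^n$; condition (iii) of Definition~\ref{sk_det} for this discrete problem follows from the projection characterization in Remark~\ref{convex}(i) applied jump by jump. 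Since $\inf_t d(a_t,\partial D_t) > 0$ and the approximations converge uniformly, for $n$ large one has $\inf_t d(a^n_t,\partial D^n_t) \ge m > 0$; Proposition~\ref{est_var} then yields a uniform bound $|k^n|_T \le C$.

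The main obstacle is showing $(x^n)$ is Cauchy for the uniform norm. Applying Lemma~\ref{estim} to $(x^n,k^n)$ and $(x^m,k^m)$, the first two terms of~\eqref{ineq_estim} tend to zero uniformly in $t$, but the cross term
$$2\int_0^t \langle y^n_t - y^n_s + y^m_s - y^m_t,\; d(k^n_s - k^m_s)\rangle$$
has an integrand vanishing only pointwise while $d(k^n-k^m)$ carries bounded (not vanishing) total variation. To control it, I invoke Proposition~\ref{bill_r}: given $\epsilon > 0$, choose a partition $0 = s_0 < \cdots < s_v = T$ on which $y$ oscillates by less than $\epsilon$; for $n$ large, $y^n$ oscillates by less than $\epsilon + 2\|y^n - y\|_\infty$ on each $[s_{j-1}, s_j[$. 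Splitting the integral along this partition and using the uniform bound on $|k^n|_T + |k^m|_T$ produces an estimate of order $\epsilon + \|y^n-y\|_\infty + \|y^m-y\|_\infty$, which establishes the Cauchy property. Denoting by $x$ the uniform limit (a càdlàg function) and setting $k := x - y$, conditions (i)--(ii) of Definition~\ref{sk_det} are immediate from the construction, closedness of $D_t$, and $d_H(D^n_t, D_t) \to 0$; for (iii), given a test process $z \in \mathcal{D}$, discretize $z$ in the same fashion to obtain $z^n \in \mathcal{D}^n$ converging uniformly to $z$, and pass to the limit in $\int_0^t \langle x^n_s - z^n_s, dk^n_s\rangle \le 0$ via the same partition argument. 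The estimate~\eqref{estim_k_} follows by passing to the limit in~\eqref{estim_k_n}.
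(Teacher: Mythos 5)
Your proposal follows essentially the same route as the paper: uniqueness from Lemma \ref{estim}, existence via the discretization of Lemma \ref{disc}, the explicit projection recursion solving $SP_{\mathcal{D}^n}(y^n)$ (with condition (iii) from Remark \ref{convex}(i)), the uniform variation bound from Proposition \ref{est_var}, the Cauchy property from Lemma \ref{estim}, and passage to the limit. Two small points, neither fatal. First, you misdiagnose the cross term: grouping the integrand as $(y^n_t-y^m_t)-(y^n_s-y^m_s)$ shows it is bounded uniformly in $s,t$ by $2\,(\|y^n-y\|_\infty+\|y^m-y\|_\infty)$, so together with the uniform bound on $|k^n|_T+|k^m|_T$ the cross term vanishes directly; the detour through Proposition \ref{bill_r} is unnecessary, and as described (oscillation control only within a partition cell, while $s$ and $t$ may lie in different cells) it does not by itself deliver the stated estimate. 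Second, your claim that the discretized test function $z^n$ converges uniformly to $z$ is false in general, since the partition of Lemma \ref{disc} is adapted to $(y,a,\mathcal{D})$ and need not track the jumps of an arbitrary $z\in\mathcal{D}$; this is harmless because $\mathrm{d}k^n$ is supported on the partition points, where $z^n=z$ and $D^n=D$, so $\int_0^t\langle x^n_s-z_s,\mathrm{d}k^n_s\rangle\le 0$ holds directly and one concludes by the convergence of Stieltjes integrals under uniform convergence of integrands and of integrators with uniformly bounded variation (the paper cites Lemma 3.9 of \cite{ouknine20} for exactly this step).
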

	\begin{proof}
		\begin{itemize}
			\item [i)] \textbf{Uniqueness:} It follows immediately from  Lemma \ref{estim}.
			\item [ii)] \textbf{Existence:}
			We consider the same discretization of $y$, $a$ and $\mathcal{D}$ as in Lemma \ref{disc} along with the following scheme.
		$$
		\begin{aligned}
			& x_t^n= \begin{cases}y_0, & \text{if} \quad t \in\left[0, t_1^n\right[, \\
				\Pi_{D_{t_{i-1}^n}^n}(x_{t_{i-2}^n}^n+y_t^n-y_{t_{i-2}^n}), & \text{if} \quad t \in[t_{i-1}^n, t_i^n[, i=1, \ldots, i_n,\\
				\Pi_{D_{t_{i_{n}-1}^n}^n}(x_{t_{i_{n}-1}^n}^n+y_t^n-y_{t_{i_{n}-1}^n}), & \text{if}\quad t \in\left[t_{i_{n}}^n, T\right]\end{cases} \\
			&
		\end{aligned}
		$$
		$$
		\begin{aligned}
			& k_t^n= \begin{cases}0, & \text{if} \quad  t \in\left[0, t_1^n[\right. \\
				k_{t_{i-2}^n}^n+x_{t}^n-x_{t_{i-2}^n}^n-y_t^n+y_{t_{i-2}^n}, & \text{if} \quad  t \in\left[t_{i-1}^n, t_i^n[\right. \, i=1, \ldots, i_n,\\
				k_{t_{i_{n}-1}^n}^n+x_t^n-x_{t_{i_{n}-1}^n}^n-y_t^n+y_{t_{i_{n}-1}^n}, & \text{if} \quad  t \in[t_{i_{n}}^n, T].\end{cases} \\
			&
		\end{aligned}
		$$ We will complete the proof in two steps.
			\begin{itemize}
				
				\item[\textbf{Step 1}:] We show that $(x^n,k^n)=SP_{\mathcal{D}^n}(y^n)$, where  $\mathcal{D}^n=\{D^n_t,\quad t \in[0, T] \}$.
				\begin{itemize}
					\item[$\bullet$] By the construction of the scheme above, we have for every $t\in [0,T]$,$$x^n_t=y^n_t+k^n_t \in D_{t}^n.$$
					\item[$\bullet$] Let $(v^n)_{n\geq 1}$ be a sequence of elements of $\mathbb{D}([0,T],\mathbb{R}^d)$ such that for every $n\geq 0$, $v^n\in\mathcal{D}^n$, then
					\begin{align*}
						\int_{0}^t \langle x^n_s-v_s^n, \mathrm{d}k^n_s\rangle=\sum_{i=1}^{i_n} \langle x_{{t_i^n}\land t}^n-v_{{t_i^n}\land t}^n, \Delta k^n_{{t_i^n}\land t}\rangle%\mathds{1}_{\{x_{{t_i^n}}\in\partial D_{{t_i^n}}\}}.
						\\
						=\sum_{i=1}^{i_n} \langle \Pi_{D_{{t_{i}^n}\land t}^n}(x_{{{t_{i-1}^n}\land t}}^n+ \Delta y_{{t_i^n}\land t}^n)-&v_{{t_i^n}\land t}^n, \Pi_{D_{{t_i^n}\land t}^n}(x_{{{t_{i-1}^n}\land t}}^n+ \Delta y_{{t_i^n}\land t}^n)-(x_{{{t_{i-1}^n}\land t}}^n+ \Delta y_{{t_{i-1}^n}\land t}^n)\rangle.
					\end{align*}
					By (i) in Remark \eqref{convex}, the previous sum is non positive.
				\end{itemize}
				\item[\textbf{Step 2}:] We show that $x^n,k^n$ converge to the solution of the reflection problem $RP_{\mathcal{D}}(y)$ in the space $\big(\mathbb{D}([0,T],\mathbb{R}^d), |.|_T\big)$. 
				
				On the one hand, we have 
				$$\inf_{t\in[0,T]} d(a_t^n,\partial D_t^n)=\min_{i=0,\dots,i_n-1}\left(\inf_{t\in[t_{i}^n,t_{i+1}^n[}d(a_{t_{i}^n},\partial D_{t_{i}^n})\right)\land d(a_T,\partial D_T)\geq \inf_{t\in[0,T]} d(a_t,\partial D_t), $$
				which is positive and independent of $n$. Combined with  Lemma \ref{disc}, the assumptions of Proposition \ref{est_var} are satisfied. Therefore, $|k^n|$ satisfies inequality \eqref{estim_k_n}. On the other hand, combining \eqref{estim_k_n} with the inequality \eqref{ineq_estim} in Lemma \ref{estim}, we obtain $x^n$ and $k^n$ are Cauchy sequences in $\big(\mathbb{D}([0,T],\mathbb{R}^d), |.|_T\big)$. Again from Proposition \ref{est_var},  $k^n$ converges uniformly to a function with finite variation. Denote by $x$ and $k$ the limits of $x^n$ and $k^n$, respectively. 
				\begin{itemize}
					\item[$\bullet$] Let $t\in[0,T]$, we have
					$$d(x_t,D_t)\leq |x_t-x_t^n| + d(x_t^n, D_t)  \leq \sup_{t\in [0,T]}|x_t-x_t^n|+ \sup_{t\in [0,T]}d_H(D_t^n,D_t). $$
					We tend n to $\infty$, we get $d(x_t,D_t)=0$, this implies $x_t\in D_t $.
					\item[$\bullet$] We show (iii) of Definition \ref{sk_det}.
					
					Let $v\in\mathbb{D}([0,T],\mathbb{R}^d)$ such that $v\in \mathcal{D}$.
					We have $x^n$ and $k^n$ converge uniformly to $x$ and $k$.
					On the one hand, using Lemma 3.9 in \cite{ouknine20}, for every $t\geq 0$, we have
					$$\lim\limits_{n}\int_{0}^t \langle x^n_s-v_s, \mathrm{d}k^n_s\rangle= \int_{0}^t \langle x_s-v_s, \mathrm{d}k_s\rangle,$$
					on the other hand,
					\begin{align*}
						\int_{0}^t \langle x^n_s-v_s, \mathrm{d}k^n_s\rangle&=\sum_{i=0}^{i_n} \langle x_{{t_i^n}\land t}^n-v_{{t_i^n}\land t},\Delta k_{{t_i^n}\land t}^n \rangle\leq 0.
					\end{align*}
					%[]This completes the proof.]  why
				\end{itemize}
				It is  clear that $(x,k)$ satisfy the assumptions of Proposition \ref{estim_k_n}, hence \eqref{estim_k_} holds true.
				
				This  completes the proof.
			\end{itemize}
		\end{itemize}
	\end{proof}
	
	\section{Reflected McKean-Vlasov stochastic differential equation}\label{sec3}
	In this section, we consider a probability space $\left(\Omega, \mathbb{F},\left(\mathcal{F}_t\right)_{t \leq T},\mathbb{P}\right)$ satisfying the usual conditions, with an  $\mathbb{F}$-adapted and $\mathbb{R}^m$-valued Brownian motion $(W_t)_{0 \leq t \leq T}$ and $(\tilde{N}_t)_{0 \leq t \leq T}$ is a martingale measure in $\mathbb{R}^d \backslash\{0\}$ with Lévy measure $\lambda$ independent of $(W_t)_{0 \leq t \leq T}$, corresponding to a standard Poisson random measure $N(t, A)$. For any measurable subset of $\mathbb{R}^d \backslash\{0\}$ such that $\lambda(A)<\infty$, we have:
	$$
	\tilde{N}_t(A)=N(t, A)-t \lambda(A),
	$$
	where $N(t, A)$ satisfies that
	$$
	\mathbb{E}[N(t, A)]=t \lambda(A) .
	$$
	
	The measure $\lambda(\cdot)$ is supposed to be a $\sigma$-finite measure on $\mathbb{R}^d \backslash\{0\}$ and satisfies the following integrability condition
	$$
	\int_{\mathbb{R}^d \backslash\{0\}}\left(1 \wedge|z|^2\right) \lambda(\mathrm{d} z)<\infty .
	$$
	\subsection{Application to stochastic processes}
	Now we give the definition of Skorokhod problem for processes.
	\begin{definition}\label{stoch_def}
		Let  $\mathcal{D}=\{ D_t, t\in[0,T]\}$ be a family of time-dependent sets such that for every $t$,  $D_t\in\mathcal{C}$ and $t\to D_t$ is càdlàg with respect to Hausdorff metric, and  $D_T=D_{T^-}$. Let $Y$ be a càdlàg $\mathbb{F}$-adapted process such that $Y_0\in D_0$. We say that a pair of processes $(X,K)$ is a solution of the reflection problem associated with $Y$  and $\mathcal{D}$ if $X$ and $K$ are càdlàg and $\mathbb{F}$-adapted processes, such that
		\begin{itemize}	
			\item[(i)]  $X_t=Y_t+K_t,\; \text{for every} \;t \in [0,T]$,
			\item[(ii)] $X_t \in \mathrm{D}_t, \; \text{for every} \; t \in [0,T]$,
			\item[(iii)] For every  càdlàg and $\mathbb{F}$-adapted process  $Z$ such that $Z\in \mathcal{D}$, we have 
			$$\text{for every } t\geq 0\;\;\;\int_0^t \langle X_{s} - Z_s,  \mathrm{d}K_s \rangle \leq 0.$$	
		\end{itemize}
		We write $(X,K)=SP_{\mathcal{D}}(Y)$.
	\end{definition}
	\begin{proposition}\label{sk_stoch}
		Let  $\mathcal{D}=\{ D_t, t\in[0,T]\}$ be a family of time-dependent sets such that for every $t$,  $D_t\in\mathcal{C}$ and $t\to D_t$ is càdlàg with respect to Hausdorff metric. Let $Y$ be a càdlàg $\mathbb{F}$-adapted process such that $Y_0\in D_0$. We assume that there exists a càdlàg and $\mathbb{F}$-adapted process $A$ such $A\in\mathring{\mathcal{D}}$ and $\inf_{t\in[0,T]}d(A_t,\partial D_t)>0$. Then, there exists a unique solution to the problem $SP_{\mathcal{D}}(Y)$.
	\end{proposition}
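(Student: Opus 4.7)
The plan is to lift the deterministic theory established in Section \ref{sec2} to the stochastic setting by working pathwise and then recovering adaptedness through the constructive approximation. For uniqueness, given two solutions $(X,K)$ and $(\tilde X,\tilde K)$ of $SP_{\mathcal D}(Y)$, I fix $\omega\in\Omega$ and apply Lemma \ref{estim} with $\mathcal D=\tilde{\mathcal D}$ and $y(\omega)=\tilde y(\omega)=Y(\omega)$; every term on the right-hand side of \eqref{ineq_estim} vanishes, which yields $X(\omega)=\tilde X(\omega)$ pointwise in $t$, and consequently $K=\tilde K$.

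For existence I would reproduce the discretization scheme from the proof of the deterministic theorem, substituting the processes $Y$ and $A$ for $y$ and $a$. The approximating times
$$t_i^n=\bigl(t_{i-1}^n+1/n\bigr)\wedge\inf\bigl\{t>t_{i-1}^n:d_H(D_{t_{i-1}^n},D_t)\geq 1/n\text{ or }|Y_{t_{i-1}^n}-Y_t|\geq 1/n\text{ or }|A_{t_{i-1}^n}-A_t|\geq 1/n\bigr\}\wedge T$$
are $\mathbb F$-stopping times: the Hausdorff clause acts as a deterministic clock, and the two hitting-time clauses define stopping times thanks to the right-continuity of $Y$ and $A$ and the usual conditions on $(\mathcal F_t)$. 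An induction on $i$ then shows that $X^n_{t_i^n}$ and $K^n_{t_i^n}$ are $\mathcal F_{t_i^n}$-measurable, since $y\mapsto\Pi_{D_s}(y)$ is $1$-Lipschitz, and $s\mapsto D_s$ being càdlàg for the Hausdorff metric makes $(s,y)\mapsto\Pi_{D_s}(y)$ jointly Borel. Because the iterates are piecewise constant on the stochastic intervals $[t_{i-1}^n,t_i^n)$, the processes $X^n$ and $K^n$ are càdlàg and $\mathbb F$-adapted.

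To pass to the limit I apply Lemma \ref{disc} $\omega$-by-$\omega$, so that $Y^n$, $A^n$ and $\mathcal D^n$ converge uniformly to $Y$, $A$ and $\mathcal D$ pathwise; the hypothesis $\inf_{t}d(A_t,\partial D_t)>0$ transports the assumptions of Proposition \ref{est_var} pathwise and yields a bound on $|K^n|_T$ independent of $n$. Feeding this bound into Lemma \ref{estim}, exactly as in the deterministic existence proof, shows that $(X^n(\omega),K^n(\omega))$ is Cauchy in $(\mathbb D([0,T],\mathbb R^d),|\cdot|_T)$ and converges uniformly to the unique pathwise solution $(X(\omega),K(\omega))$ of $RP_{\mathcal D}(Y(\omega))$. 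The three conditions of Definition \ref{stoch_def} hold pathwise by construction, and since uniform almost sure limits of $\mathbb F$-adapted càdlàg processes remain $\mathbb F$-adapted and càdlàg, $(X,K)$ is the desired stochastic solution.

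The main obstacle is adaptedness: the deterministic theorem by itself only produces a solution pointwise in $\omega$, and the delicate point is to verify that the stochastic counterpart of the discretization really consists of stopping times and that the projection-based induction preserves $\mathbb F$-measurability, so that the uniform pathwise limit inherits adaptedness and the lifting from Definition \ref{sk_det} to Definition \ref{stoch_def} is legitimate.
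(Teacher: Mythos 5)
Your proposal is correct and follows essentially the same route as the paper: the paper's own proof is a two-line remark that the deterministic construction of Section \ref{sec2} applies pathwise (the integrals being Stieltjes integrals) and that the discretization scheme visibly preserves adaptedness and the càdlàg property. You have simply made explicit the details the paper leaves implicit (stopping-time property of the $t_i^n$, measurability of $(s,y)\mapsto\Pi_{D_s}(y)$, adaptedness of the uniform limit), all of which are sound.
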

	\begin{proof}
		The proof is similar to the one given in the previous section, as the integral in the reflection problem is in Stieltjes sense. The proof  in the  previous section shows  that if $Y$ is  càdlàg and  $\mathbb{F}$-adapted, then $X$ and $K$ are also càdlàg and  $\mathbb{F}$-adapted processes.
	\end{proof}
	In the following, we provide an estimate for the solution for inputs that are semimartingales.	\begin{proposition}\label{estim_quad}
		Let  $\mathcal{D}=\{ D_t, t\in[0,T]\}$ be a family of time-dependent sets such that for every $t$, $D_t\in\mathcal{C}$ and $t\to D_t$ is càdlàg with respect to Hausdorff metric. Let $Y$ and $\tilde{Y}$ be two semimartingales with the following decompositions: $Y=M+V$ and $\tilde{Y}=\tilde{M}+\tilde{V}$, such that $Y_{0},\tilde{Y}_{0}\in D_0$. Assume that $M$ and $\tilde{M}$ are martingales in $\mathcal{S}_{0,T}$, $V$ and $\tilde{V}$ are càdlàg adapted processes with finite variation such that $|V|$ and $|\tilde{V}|$ are square-integrable.
		Let $(X,K)$  and  $(\tilde{X},\tilde{K})$ be the solutions associated with $RP_{\mathcal{D}}(Y)$ and $RP_{\mathcal{D}}(\tilde{Y})$, respectively. Then, there exists a constant $C$ such that for all stopping times $\tau$ in $[0,T]$,  we have:
		\begin{equation*}
			\mathbb{E}(\sup_{0\leq t\leq \tau}|X_{t}-\tilde{X}_{t}|^{2})\leq C\mathbb{E}\left(|X_0-\tilde{X}_{0}|^2+[M-\tilde{M}]_{\tau}+|V-\tilde{V}|_{\tau}^2\right),
		\end{equation*}
		where $[M-\tilde{M}]$ denotes the quadratic variation of $M-\tilde{M}$.
	\end{proposition}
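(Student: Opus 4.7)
The plan is to apply the change of variable formula to $|X_t - \tilde X_t|^2$, using the decomposition $X - \tilde X = U + W + L$ with $U := M - \tilde M$, $W := V - \tilde V$, and $L := K - \tilde K$. Since $V, \tilde V, K, \tilde K$ are of finite variation with no Brownian component, the continuous local martingale part of $A := X - \tilde X$ reduces to $U^c$, so that
\[
|A_t|^2 = |A_0|^2 + 2\int_0^t A_{s^-}\,dU_s + 2\int_0^t A_{s^-}\,dW_s + 2\int_0^t A_{s^-}\,dL_s + [U]_t + \sum_{s\leq t}\bigl(|\Delta A_s|^2 - |\Delta U_s|^2\bigr).
\]

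The crucial step is to exploit the Skorokhod monotonicity to control the $L$-integral. Applying property (iii) of Definition \ref{stoch_def} to $(X,K)$ with test process $\tilde X$, and to $(\tilde X,\tilde K)$ with test process $X$, and summing, yields $\int_0^t A_s \cdot dL_s \leq 0$. The difficulty is that the Itô formula produces the left-continuous integrand $A_{s^-}$, not $A_s$; the discrepancy is $-\sum_{s\leq t}\Delta A_s\cdot\Delta L_s$. Using $\Delta L = \Delta A - \Delta U - \Delta W$ together with the polarization identity $|a|^2 - 2a\cdot b = |a-b|^2 - |b|^2$, this correction combines with the jump part $|\Delta A_s|^2 - |\Delta U_s|^2$ of the quadratic variation to give
\[
-2\,\Delta A_s\cdot\Delta L_s + |\Delta A_s|^2 - |\Delta U_s|^2 = |\Delta U_s + \Delta W_s|^2 - |\Delta L_s|^2 - |\Delta U_s|^2 \leq |\Delta U_s|^2 + 2|\Delta W_s|^2.
\]
Summing and using $\sum|\Delta U_s|^2 \leq [U]_t$ together with $\sum|\Delta W_s|^2 \leq |W|_t^2$ produces the clean pathwise estimate
\[
|A_t|^2 \leq |A_0|^2 + 2\int_0^t A_{s^-}\,dU_s + 2\int_0^t A_{s^-}\,dW_s + 2\,[U]_t + 2\,|W|_t^2.
\]

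It then remains to take $\sup_{t\leq\tau}$ and expectation. After a standard localization of the local martingale, the Burkholder–Davis–Gundy inequality combined with Young's inequality gives $\mathbb{E}\sup_{t\leq\tau}\bigl|\int_0^t A_{s^-}\,dU_s\bigr| \leq \varepsilon\,\mathbb{E}\sup_{s\leq\tau}|A_s|^2 + C_\varepsilon\,\mathbb{E}[U]_\tau$, and the pathwise bound $|\int_0^t A_{s^-}\,dW_s| \leq \sup_{s\leq\tau}|A_s|\cdot|W|_\tau$ followed by Young yields the analogous $\varepsilon\,\mathbb{E}\sup|A|^2 + C_\varepsilon\,\mathbb{E}|W|_\tau^2$. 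Choosing $\varepsilon$ small enough to absorb the $\sup$-term into the left-hand side, and then passing to the limit in the localizing stopping times via Fatou's lemma, gives the desired inequality. The main obstacle I expect is precisely the jump reconciliation in the reflection integral — the Skorokhod condition holds for $A_s$ while Itô delivers $A_{s^-}$ — but the algebraic identity above absorbs it at the cost of terms already appearing on the right-hand side of the estimate, so everything else is routine.
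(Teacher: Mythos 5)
Your proof is correct, and it rests on the same two pillars as the paper's argument: the Skorokhod monotonicity (condition (iii) applied to $(X,K)$ with test process $\tilde X$ and to $(\tilde X,\tilde K)$ with test process $X$, then summed) to discard $\int_0^t\langle X_s-\tilde X_s,\mathrm{d}(K_s-\tilde K_s)\rangle$, followed by Burkholder--Davis--Gundy and Young's inequality to absorb $\varepsilon\,\mathbb{E}\sup_{s\le\tau}|X_s-\tilde X_s|^2$ into the left-hand side. The organization differs in the middle step: the paper first invokes the deterministic estimate of Lemma \ref{estim} with $\mathcal{D}=\tilde{\mathcal{D}}$ (so the Hausdorff term vanishes), which leaves the Stieltjes integral $\int_0^t\langle Y_t-Y_s+\tilde Y_s-\tilde Y_t,\mathrm{d}(K_s-\tilde K_s)\rangle$, and then integrates by parts to turn it into $\int_0^t\langle X_{s^-}-\tilde X_{s^-},\mathrm{d}(Y_s-\tilde Y_s)\rangle$ plus quadratic-variation and boundary terms; you instead apply It\^o directly to $|X_t-\tilde X_t|^2$ and reconcile the $A_{s^-}$-versus-$A_s$ mismatch in the reflection integral by hand. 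Your jump algebra is sound, and its only cost is extra multiples of $[M-\tilde M]_\tau$ and $|V-\tilde V|_\tau^2$, which already sit on the right-hand side; in the paper's route this bookkeeping is hidden inside Lemma \ref{estim}, where the change-of-variable formula for the finite-variation function $k-\tilde k$ produces the right-endpoint integrand directly and simply discards $-\sum_{s\le t}|\Delta(k_s-\tilde k_s)|^2\le 0$. One small remark: the localization you invoke before absorbing the supremum term is not actually needed here, since $X_t,\tilde X_t\in D_t\subset B(0,r_t)$ with $\sup_{t\le T}r_t<\infty$ (the domain is c\`adl\`ag in the Hausdorff metric with values in $\mathcal{C}$), so $\mathbb{E}\sup_{t\le\tau}|X_t-\tilde X_t|^2$ is finite a priori.
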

	\begin{proof}
		By Lemma \ref{estim}, we have 
		\begin{equation*}
			|X_{t}-\tilde{X}_{t}|^2\leq |Y_t-\tilde{Y}_{t}|^2 + 2\int_{0}^{t}\langle Y_t-Y_s+\tilde{Y}_s-\tilde{Y}_t,\mathrm{d}(K_s-\tilde{K}_s) \rangle.
		\end{equation*}
		We apply It\^o's formula, we get
		\begin{align*}
			\int_{0}^{t}\langle Y_t-Y_s+\tilde{Y}_s-\tilde{Y}_t,\mathrm{d}(K_s-\tilde{K}_s)&=\int_{0}^{t}\langle K_{s^-}-\tilde{K}_{s^-},\mathrm{d}(Y_s-\tilde{Y}_s)\rangle\\
			&= \int_{0}^{t}\langle X_{s^-}-\tilde{X}_{s^-},\mathrm{d}(Y_s-\tilde{Y}_s)\rangle-\int_{0}^{t}\langle Y_{s^-}-\tilde{Y}_{s^-},\mathrm{d}(Y_s-\tilde{Y}_s)\rangle\\
			&=\int_{0}^{t}\langle X_{s^-}-\tilde{X}_{s^-},\mathrm{d}(Y_s-\tilde{Y}_s)\rangle + [Y-\tilde{Y}]_{t}+ |Y_0-\tilde{Y}_{0}|^2 - |Y_t-\tilde{Y}_{t}|^2
		\end{align*}
		We use the decomposition of $Y$ and $\tilde{Y}$, apply Burkholder-Davis-Gundy's inequality, and perform a simple computation to obtain
		\begin{align*}
			\mathbb{E}(\sup_{0\leq t\leq \tau}|X_{t}-\tilde{X}_{t}|^{2})
			&\leq  \frac{1}{2} \mathbb{E}(\sup _{s \leq \tau}|X_{s}-\hat{X}_{s}|^2)+ C\mathbb{E}\left(|Y_0-\tilde{Y}_{0}|^2+[M-\tilde{M}]_{\tau}+ |V-\tilde{V}|_{\tau}^2\right).
		\end{align*}
		This completes the proof.
	\end{proof}
	\begin{remark}\label{Rq}
		Note that if for some stopping time $\tau$ in $[0,T]$, we have $$\mathbb{E}(\sup_{t \in[0,\tau]}|A_t|^2)+\mathbb{E}(\sup_{t \in[0,\tau]}|Y_t|^2)<\infty.$$ From inequality (\ref{estim_k_}), we also have	$$\mathbb{E}(|K|_{\tau} + \sup_{t \in[0,\tau]}|X_t|^2)<\infty.$$ 
	\end{remark}
	\subsection{Existence and uniqueness of strong solution for reflected Mckean-Vlasov equations}
	We first recall the existence and uniqueness result in the case of SDEs.
	\begin{definition}\label{def_EDS}
		Let  $\mathcal{D}=\{ D_t, t\in[0,T]\}$ be a family of time-dependent sets such that for every $t$,  $D_t\in\mathcal{C}$ and $t\to D_t$ is càdlàg with respect to Hausdorff metric. Let $X_{0}$  be an $\mathcal{F}_{0}$ measurable random variable, with $ X_{0}\in D_{0}$, 
		$(b, \sigma):[0, T] \times \Omega \times \mathbb{R}^d \times \mathcal{P}\left(\mathbb{R}^d\right) \rightarrow \mathbb{R}^d \times \mathbb{R}^{d \times m}$ and $\beta:[0, T] \times \Omega \times \mathbb{R}^d  \times \mathcal{P}\left(\mathbb{R}^d\right) \times \mathbb{R}^d \backslash\{0\} \rightarrow \mathbb{R}^d $ be measurable random maps.
		A couple $(X, K)$ is said to be a solution to the reflected McKean-Vlasov SDE, which we denote by $E(\mathcal{D}, b,\sigma,\beta)$, if $(X,K)=RP_{\mathcal{D}}(Y)$, where $Y$ is given by the following:
		\begin{equation*}
			Y_t= X_{0}+ \int_{0}^{t} b\left(s, X_s, \mathcal{L}_{X_s}\right) \mathrm{d} s+ \int_{0}^{t} \sigma\left(s, X_s, \mathcal{L}_{X_s}\right) \mathrm{d} W_s+\int_{0}^{t}\int_{\mathbb{R}^d \backslash\{0\}}\beta\left(s,  X_{s^-}, \mathcal{L}_{X_{s^-}},z\right)  \tilde{N}(\mathrm{d} s, \mathrm{d} z),\,\text{for}\;\; t\geq 0.
		\end{equation*}
	\end{definition}
	We consider the following set of assumptions:
	\begin{itemize}
		\item $\left(\mathbf{A_1}\right)$: $\mathcal{D}=\{ D_t, t\in[0,T]\}$ is a family of time-dependent sets such that for every $t$,  $D_t\in\mathcal{C}$ and $t\to D_t$ is càdlàg with respect to Hausdorff metric.
		\item$\left(\mathbf{A_2}\right)$: There exists a process $A$ in $\mathcal{S}_{0,T}$ such that $A\in\mathring{\mathcal{D}}$, and $\inf_{t\in[0,T]}d(A_t,\partial D_t)>0$.%,  and $\mathbb{E}(\sup_{0\leq t\leq T}|A_s|^2)<\infty$.
		\item $\left(\mathbf{A_3}\right)$: $X_{0}$  is $\mathcal{F}_{0}$ measurable random variable with $ X_{0}\in D_{0}$ and $\mathbb{E}(|X_0|^2)<\infty$.
		\item $\left(\mathbf{A_4}\right)$:  For fixed $\mu\in \mathcal{P}(\mathbb{R}^d)$, the processes $b(.,.,0,\mu)$ and $\sigma(.,.,0,\mu)$ are  elements of $\mathbb{H}^{2,d}$ and $\mathbb{H}^{2,dm}$, respectively. In addition, for $(\omega,t)$ fixed, there exists $\gamma>0$ such that $$\quad\left|(b, \sigma)(t, \omega, x, \mu)-(b, \sigma)\left(t, \omega, x^{\prime}, \mu^{\prime}\right)\right| \leq \gamma\left(\left|x-x^{\prime}\right|+W\left(\mu, \mu^{\prime}\right)\right).$$
		\item $\left(\mathbf{A_5}\right)$: $\beta$ is $\mathscr{P} \otimes \mathscr{B}_{\mathbb{R}^d}\otimes\mathcal{P}_{2}(\mathbb{R}^d) \otimes \mathcal{B}_{\mathbb{R}^d \backslash\{0\}}$-measurable. For every $(t,\omega)$, we have
		$$\left(\int_{\mathbb{R}^d \backslash\{0\}}\left|\beta(s,\omega,x,\mu, z)-\beta\left(s,\omega,x^{\prime},\mu^{\prime}, z\right)\right|^2\lambda(\mathrm{d} z)\right)^\frac{1}{2} \leq \gamma \left(\left|x-x^{\prime}\right|+W\left(\mu, \mu^{\prime}\right)\right),$$
		$$\int_{\mathbb{R}^d \backslash\{0\}} |\beta(s,\omega,x,\mu, z)|^2\lambda(\mathrm{d} z)\leq \gamma(1+|x|^2).$$
		%\item "add assumption on meaurability?"
	\end{itemize}
	
	\begin{theorem}\label{theorem_1}
		Under assumptions $\left(\mathbf{A}_1\right)$-$\left(\mathbf{A}_5\right)$, for every $\mu \in\mathcal{P}_2(\mathbb{D}([0,T],\mathbb{R}^d))$, there exists a unique solution $(X^{\mu},K^{\mu})=RP_{\mathcal{D}}(Y^{\mu})$, where $X^{\mu}\in\mathcal{S}^{2}_{0,T}$, and $Y^{\mu}$ is given by: 
		\begin{equation}
			Y_t^{\mu}= X_{0}+ \int_{0}^{t} b\left(s, X_s^{\mu}, \mu_s\right) \mathrm{d} s+ \int_{0}^{t} \sigma\left(s, X_s^{\mu}, \mu_s\right) \mathrm{d} W_s+\int_{0}^{t}\int_{\mathbb{R}^d\backslash\{0\}}\beta\left(s,  X_{s^-}^{\mu}, \mu_{s^-},z\right)  \tilde{N}(\mathrm{d} s, \mathrm{d} z),\;\;\text{for}\;\; t\geq 0.\label{eq1}
		\end{equation}
	\end{theorem}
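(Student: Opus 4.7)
The plan is a Picard iteration on $\mathcal{S}^2_{0,T}$ in which the reflection operator $RP_{\mathcal{D}}$, made available by Proposition \ref{sk_stoch} under $(\mathbf{A}_1)$--$(\mathbf{A}_2)$, is composed at each step with the input map furnished by the coefficients $b,\sigma,\beta$. Concretely, I would set $X^{0}\equiv X_0$ and recursively define
\begin{equation*}
Y^{n}_t=X_0+\int_0^t b(s,X^n_s,\mu_s)\,\mathrm{d}s+\int_0^t \sigma(s,X^n_s,\mu_s)\,\mathrm{d}W_s+\int_0^t\!\!\int_{\mathbb{R}^d\setminus\{0\}}\beta(s,X^n_{s^-},\mu_{s^-},z)\,\tilde{N}(\mathrm{d}s,\mathrm{d}z),
\end{equation*}
and $(X^{n+1},K^{n+1}):=RP_{\mathcal{D}}(Y^n)$. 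Assumptions $(\mathbf{A}_4)$--$(\mathbf{A}_5)$ give linear growth of the coefficients, and since $\mu\in\mathcal{P}_2(\mathbb{D}([0,T],\mathbb{R}^d))$ entails $\sup_{s\in[0,T]}\int|x|^2\,\mu_s(\mathrm{d}x)<\infty$, standard BDG-type estimates together with Remark \ref{Rq} show $X^n\in\mathcal{S}^2_{0,T}$ for every $n$, with moments bounded uniformly in $n$ via a Gronwall argument.

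The core step is to apply Proposition \ref{estim_quad} to the two semimartingales $Y^n$ and $Y^{n-1}$. Writing $Y^n=M^n+V^n$ with $M^n$ the sum of the Brownian and compensated Poisson integrals and $V^n$ the drift, the Lipschitz estimates of $(\mathbf{A}_4)$--$(\mathbf{A}_5)$ combined with the isometry for the Brownian part and the quadratic variation identity $\mathbb{E}[\int\!\int\beta\,\tilde{N}]_t=\mathbb{E}\int_0^t\!\!\int|\beta|^2\lambda(\mathrm{d}z)\mathrm{d}s$ for the compensated Poisson integral yield
\begin{equation*}
\mathbb{E}\Big(\sup_{s\leq t}|X^{n+1}_s-X^n_s|^2\Big)\leq C\int_0^t\mathbb{E}\Big(\sup_{r\leq s}|X^n_r-X^{n-1}_r|^2\Big)\,\mathrm{d}s,
\end{equation*}
and iterating produces the summable bound $(CT)^n/n!$. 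Hence $(X^n)$ is Cauchy in $\mathcal{S}^2_{0,T}$; its limit $X^\mu$ feeds back into the coefficients to give the $Y^\mu$ of \eqref{eq1} as the $\mathcal{S}^2$-limit of $Y^n$, and Lemma \ref{estim} promotes this to uniform convergence of $K^n$ to a càdlàg adapted $K^\mu$ with finite variation (passing to a subsequence if needed). Closure of the reflection constraints under uniform limits — as already carried out in Step 2 of the deterministic theorem in Section \ref{sec2}, using Lemma 3.9 of \cite{ouknine20} to pass condition (iii) of Definition \ref{stoch_def} to the limit — then yields $(X^\mu,K^\mu)=RP_{\mathcal{D}}(Y^\mu)$.

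Uniqueness is immediate from the same device: for any two solutions $(X,K)$ and $(X',K')$, Proposition \ref{estim_quad} combined with the Lipschitz bounds gives $\mathbb{E}(\sup_{s\leq t}|X_s-X'_s|^2)\leq C\int_0^t\mathbb{E}(\sup_{r\leq s}|X_r-X'_r|^2)\,\mathrm{d}s$, and Gronwall forces $X=X'$, whence $K=K'$ as well. The main technical hurdle is ensuring that Proposition \ref{estim_quad} actually applies at each iteration: one needs $|V^n|_T$ square-integrable and $M^n\in\mathcal{S}^2_{0,T}$, both of which follow from the a priori $\mathcal{S}^2$-bound on $X^n$ together with the linear growth in $(\mathbf{A}_5)$ for $\beta$, but the bookkeeping must be done so that the constant $C$ in the contraction estimate depends only on $T$, $\gamma$, the jump integrability, and the second moments of $\mu$ — never on the iteration index $n$.
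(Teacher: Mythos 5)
Your argument is correct in substance but follows a genuinely different route from the paper. The paper does not run an explicit Picard iteration on $[0,T]$: it fixes $\alpha\in\ ]0,1[$, chooses a (deterministic) time horizon $\tau$ so small that the map $\varphi:X\mapsto$ first coordinate of $RP_{\mathcal{D}}(Y^{\mu}_{\cdot\wedge\tau})$ is an $\alpha$-contraction on $\mathcal{S}^{2}_{0,\tau}$ (via Proposition \ref{estim_quad}, exactly the estimate you use), invokes the Banach fixed point theorem, and then patches solutions along the sequence $\tau_n$ up to $T$; integrability of $X^{\mu}$ on all of $[0,T]$ is recovered afterwards by Gronwall. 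Your global iteration with the $(CT)^n/n!$ factorial gain avoids the localization-and-patching step entirely, at the price of one extra task the fixed-point formulation gets for free: in the paper, the fixed point of $\varphi$ is \emph{by construction} the first coordinate of a reflection problem, so no limit-passage in conditions (ii)--(iii) of Definition \ref{stoch_def} is ever needed, whereas you must identify the limit of $(X^n,K^n)$ as $RP_{\mathcal{D}}(Y^{\mu})$. Your treatment of that point is the one loose spot: uniform convergence of $K^n$ is not what Lemma \ref{estim} provides (it follows trivially from $K^{n+1}=X^{n+1}-Y^{n}$ and $\mathcal{S}^2$-convergence of both terms), and the finite-variation property of the limit needs the uniform bound \eqref{estim_k_} on $|K^n|_T$ together with lower semicontinuity of total variation, before Lemma 3.9 of \cite{ouknine20} can be invoked as in Step 2 of the deterministic theorem. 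A cleaner way to close this, consistent with the tools already in the paper, is to note that once $X^{\mu}\in\mathcal{S}^{2}_{0,T}$ is obtained as the Cauchy limit, $Y^{\mu}$ is a semimartingale satisfying the hypotheses of Proposition \ref{sk_stoch}, so $(\bar X,\bar K):=RP_{\mathcal{D}}(Y^{\mu})$ exists, and Proposition \ref{estim_quad} applied to the pair $Y^{\mu}$, $Y^{n}$ forces $\bar X=\lim_n X^{n+1}=X^{\mu}$. With that adjustment your proof is complete; the uniqueness argument via Proposition \ref{estim_quad} and Gronwall is the same in spirit as the paper's and is fine.
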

	\begin{proof}
		Note that for $\mu\in\mathcal{P}_2(\mathbb{D}([0,T],\mathbb{R}^d))$ and $s,t\geq 0$ we have
		$$W\left(\mu_s, \mu_t\right)^2 \leq \int_{\mathbb{D}([0,T],\mathbb{R}^d)}\left|\pi_s(\omega)-\pi_t(\omega)\right|^2 \mu(\mathrm{d} \omega),\; \text{and}\;
		W\left(\mu_s, \mu_{t^-}\right)^2 \leq \int_{\mathbb{D}([0,T],\mathbb{R}^d)}\left|\pi_s(\omega)-\pi_{t^-}(\omega)\right|^2 \mu(\mathrm{d} \omega).$$
		By Lebesgue’s dominated convergence theorem, we conclude that $t\to \mu_t$ is càdlàg with respect to Wasserstein metric.
		Hence, by $\left(\mathbf{A}_4\right)$ and $\left(\mathbf{A}_5\right)$, the stochastic integrals in \eqref{eq1} are well-defined.
		
		Let $\alpha \in ]0,1[$, we consider the following stopping time:
		\begin{equation*}
			\tau:=\inf\{t>0 \;:\;  C\gamma^2(2t+t^2)\geq \alpha\}\land T,
		\end{equation*}
		with $ \tau=\infty\; \text{for} \;\inf(\emptyset)$, and $C$ is the constant in Proposition \ref{estim_quad}.
		
		We consider the mapping $\varphi \colon\mathcal{S}^{2}_{0,\tau}\to\mathcal{S}_{0,\tau}^{2} $ that associates $X\in\mathcal{S}_{0,\tau}^{2}$ with $\varphi(X) $, where $\varphi(X) $ is defined as the first coordinate of the solution of the reflection problem $RP_{\mathcal{D}}\Big(Y^\mu_{.\land\tau} \Big)$, which is justified by Proposition \ref{sk_stoch}.
		
		We have for every $X\in \mathcal{S}^{2}_{0,\tau}$, $\varphi(X)\in \mathcal{S}^{2}_{0,\tau}$. Indeed, 
		from assumptions $\left(\mathbf{A}_4\right)$, $\left(\mathbf{A}_5\right)$, and through the application of Burkholder-Davis-Gundy inequality, we get
		\begin{align*}
			|| Y^{\mu}||_{\mathcal{S}^{2}_{0,\tau}}^2&\leq C_{T,\gamma,\mu} \mathbb{E}\left(|X_0|^2+\int_{0}^{\tau} |X_s|^2 \mathrm{d} s+\int_{0}^{\tau} |b(s,0,\mu_s)|^2\mathrm{d}s+\int_{0}^{\tau} |\sigma(s,0,\mu_s)|^2\mathrm{d}s\right)\\
			&+C_{T,\gamma,\mu}\mathbb{E}\left((1+\sup_{0\leq t\leq \tau}|X_{t}|^{2})\right)\\
			&\leq C_{T,\gamma,\mu}\left(\mathbb{E}\left(|X_0|^2 \right) +||X||_{\mathcal{S}^{2}_{0,\tau}}^2+1\right).
		\end{align*}
		The value of the constant $C_{T,\gamma,\mu}$ depend on $T$, $\gamma$, and $\alpha$. It varies from one line to another.
		Given that $X\in \mathcal{S}^{2}_{0,\tau}$ and based on assumptions $\left(\mathbf{A}_2\right)$ and $\left(\mathbf{A}_3\right)$, as well as the definition of the stopping time $\tau$,  we can conclude, as indicated by Remark \ref{Rq}, that  $||\varphi(X^{\mu})||_{\mathcal{S}^{2}_{0,\tau}}<\infty$.
		
		Now, we show that $\varphi$ is a contraction. Let $X,\tilde{X}\in \mathcal{S}^{2}_{0,\tau}$, by proposition \ref{estim_quad}, we have
		\begin{align*}
			|| \varphi(X)-\varphi(\tilde{X}) ||_{\mathcal{S}^{2}_{0,\tau}}^2&\leq C\Big(\mathbb{E}(\int_{0}^{\tau} |\sigma\left(s, X_s, \mu_s\right)-\sigma(s, \tilde{X}_s, \mu_s)|^2 \mathrm{d} s)\\
			&+\mathbb{E}( \int_{0}^{\tau} \int_{\mathbb{R}^d\backslash\{0\}}|\beta(s, X_{s^-}, \mu_{s^-},z)-\beta(s, \tilde{X}_s, \mu_{s^-},z)|^2 \lambda(\mathrm{d} z)\mathrm{d} s)\\
			&+\mathbb{E}(\tau \int_{0}^{\tau} |b(s, X_s, \mu_s)-b(s, \tilde{X}_s, \mu_s)|^2 \mathrm{d} s\Big)\\
			& \leq C\gamma^2 \mathbb{E}\left((2\tau+\tau^2) \sup_{0\leq t\leq \tau}|X_{t}-\tilde{X}_{t}|^{2}\right)\\
			&\leq \alpha ||X-\tilde{X}||_{\mathcal{S}^{2}_{0,\tau}}^2.\\
		\end{align*}
		Therefore, by the Banach fixed point theorem, we have  existence and uniqueness of solution on $[0,\tau]$.
		
		By induction, we define the following sequence of stopping times:
		$\tau_{0}=0,\;\text{ and }\; n\geq 1$
		$$
		\tau_{n+1}:=\inf\{t>\tau_{n}:\; C\gamma^2(2(t-\tau_{n})+t^2-\tau_{n}^2)\geq \alpha\}\land T.
		$$
		Using a similar argument as previously discussed, one can demonstrate the existence and uniqueness of the solution on each interval $[\tau_{n},\tau_{n+1}]$. Since $t\to C\gamma^2(2(t-\tau_{n})+t^2-\tau_{n}^2)$ is continuous, we obtain $\mathbb{P}(\cap_{n\geq1}\{\tau_{n}=T\})=1$, which implies the existence and uniqueness of a solution on  $[0,T]$.
		
		We show that $X^{\mu}\in\mathcal{S}^{2}_{0,T}$. Given assumptions  $\left(\mathbf{A}_4\right)$ and $\left(\mathbf{A}_5\right)$, along with inequality \ref{estim_k_}, we find
		\begin{align*}
			|| X^{\mu}||_{\mathcal{S}^{2}_{0,T}}^2&\leq C_{T,\gamma} \mathbb{E}\left(|X_0|^2+\int_{0}^{T} (1+\sup_{0 \leq s \leq u}|X_u|^2) \mathrm{d} s+\int_{0}^{T} |b(s,0,\mu_s)|^2\mathrm{d}s+\int_{0}^{T} |\sigma(s,0,\mu_s)|^2\mathrm{d}s +|| A||_{\mathcal{S}^{2}_{0,T}}\right).\\
		\end{align*}
		We utilize Gronwall's Lemma, and we conclude based on assumptions  $\left(\mathbf{A}_2\right)$ and $\left(\mathbf{A}_3\right)$.
		This completes the proof.
	\end{proof}
	
	\begin{theorem}\label{Main_th}
		Under assumptions $\left(\mathbf{A}_1\right)$-$\left(\mathbf{A}_5\right)$, a unique solution exists for the system $E(\mathcal{D}, b,\sigma,\beta)$.
	\end{theorem}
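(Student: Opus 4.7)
The plan is to realize the solution of $E(\mathcal{D}, b, \sigma, \beta)$ as the fixed point of the map
$\Phi \colon \mathcal{P}_2(\mathbb{D}([0,T],\mathbb{R}^d)) \to \mathcal{P}_2(\mathbb{D}([0,T],\mathbb{R}^d))$
that sends $\mu$ to $\mathcal{L}_{X^\mu}$, where $(X^\mu, K^\mu)$ is the unique decoupled solution provided by Theorem~\ref{theorem_1}. Since that theorem yields $X^\mu \in \mathcal{S}_{0,T}^2$, the law $\mathcal{L}_{X^\mu}$ indeed lies in $\mathcal{P}_2(\mathbb{D}([0,T],\mathbb{R}^d))$, so $\Phi$ is well-defined. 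Any fixed point $\mu^\star$ of $\Phi$ gives a solution $(X^{\mu^\star}, K^{\mu^\star})$ of the system, and conversely any solution $(X,K)$ has $\mathcal{L}_X$ as a fixed point; uniqueness of the fixed point combined with the uniqueness part of Theorem~\ref{theorem_1} will therefore yield pathwise uniqueness.

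The central step is a stability estimate comparing $X^\mu$ and $X^\nu$ for $\mu, \nu \in \mathcal{P}_2(\mathbb{D}([0,T],\mathbb{R}^d))$. Both processes are reflected in $\mathcal{D}$ and share the initial condition $X_0$, so Proposition~\ref{estim_quad} applied to the driving semimartingales $Y^\mu$ and $Y^\nu$ bounds $\mathbb{E}(\sup_{s\leq t}|X^\mu_s-X^\nu_s|^2)$ by the expected quadratic variation of the difference of the martingale parts plus the expected squared total variation of the difference of the drifts. Combining this with the It\^o isometry for the compensated Poisson integral and the Lipschitz hypotheses $\left(\mathbf{A}_4\right)$, $\left(\mathbf{A}_5\right)$ yields, for every $t \in [0,T]$,
\[
\mathbb{E}\Big(\sup_{0 \leq s \leq t}|X^\mu_s - X^\nu_s|^2\Big) \leq C_1 \int_0^t \mathbb{E}|X^\mu_s - X^\nu_s|^2\, ds + C_1 \int_0^t W(\mu_s, \nu_s)^2\, ds,
\]
and Grönwall's lemma absorbs the first term. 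Because $(X^\mu, X^\nu)$ is a coupling of $\Phi(\mu)$ and $\Phi(\nu)$ under the supremum metric $d_t$ on $\mathbb{D}([0,t],\mathbb{R}^d)$, and since the pushforward of any coupling of $\mu, \nu$ by $\Pi_s$ is a coupling of $\mu_s, \nu_s$, so that $W(\mu_s, \nu_s) \leq W_s(\mu, \nu)$ for $s \leq t$, this produces
\[
W_t(\Phi(\mu), \Phi(\nu))^2 \leq C_2 \int_0^t W_s(\mu, \nu)^2\, ds, \qquad t \in [0,T].
\]

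Iterating this inequality $n$ times gives $W_T(\Phi^n(\mu), \Phi^n(\nu))^2 \leq (C_2 T)^n/n!\; W_T(\mu, \nu)^2$, so $\Phi^n$ is a strict contraction for $n$ large enough. Since $(\mathcal{P}_2(\mathbb{D}([0,T],\mathbb{R}^d)), W_T)$ is complete (càdlàg paths form a closed subspace of bounded functions under the supremum norm, and the Wasserstein space over a Polish space is Polish as recorded in Remark~\ref{rq2}), Banach's fixed-point theorem delivers a unique fixed point $\mu^\star$, and the pair $X = X^{\mu^\star}$, $K = K^{\mu^\star}$ furnishes the unique solution of $E(\mathcal{D}, b, \sigma, \beta)$. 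I expect the main technical point to be the stability estimate for the jump component: the quadratic variation of the compensated Poisson integral of $\beta(s, X^\mu_{s^-}, \mu_{s^-}, z) - \beta(s, X^\nu_{s^-}, \nu_{s^-}, z)$ must be controlled in $L^2(\lambda)$ via $\left(\mathbf{A}_5\right)$, and one must observe that $\mu_s = \mu_{s^-}$ for Lebesgue-almost every $s$ so that the contribution to the bound genuinely reads as $W(\mu_s, \nu_s)^2\, ds$ in the Grönwall step above.
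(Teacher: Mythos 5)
Your proposal is correct and follows essentially the same route as the paper: define the map $\mu \mapsto \mathcal{L}_{X^\mu}$ via Theorem \ref{theorem_1}, use Proposition \ref{estim_quad} together with $\left(\mathbf{A}_4\right)$--$\left(\mathbf{A}_5\right)$ and Grönwall to get $W_T(\psi(\mu),\psi(\nu))^2 \leq C\int_0^T W_s(\mu,\nu)^2\,ds$, then iterate to obtain the $C^nT^n/n!$ factor and apply the Banach fixed-point theorem. The paper handles the left-limit terms by the bound $W(\mu_{s^-},\nu_{s^-}) \leq W_s(\mu,\nu)$ rather than your almost-everywhere identification $\mu_s = \mu_{s^-}$, but this is an inessential difference.
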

	\begin{proof}
		By Theorem \ref{theorem_1}, for every $\mu \in\mathcal{P}_2(\mathbb{D}([0,T],\mathbb{R}^d))$, there exists a unique solution $(X^{\mu},K^{\mu})$
		satisfying (ii), (iii) of Definition \ref{stoch_def}  and the following equation
		\begin{equation*}
			X_t^{\mu}= X_{0}+ \int_{0}^{t} b\left(s, X_s^{\mu}, \mu_s\right) \mathrm{d} s+ \int_{0}^{t} \sigma\left(s, X_s^{\mu}, \mu_s\right) \mathrm{d} W_s+\int_{0}^{t}\int_{\mathbb{R}^d\backslash\{0\}}\beta\left(s,  X_{s^-}^{\mu}, \mu_{s^-},z\right)  \tilde{N}(\mathrm{d} s, \mathrm{d} z)+K_t,\;\;\text{for}\; t\geq 0.
		\end{equation*}
		We define the mapping $\psi\colon\mathcal{P}_2(\mathbb{D}([0,T],\mathbb{R}^d))\to\mathcal{P}_2(\mathbb{D}([0,T],\mathbb{R}^d))$ which associates $\mu\in\mathcal{P}_2(\mathbb{D}([0,T],\mathbb{R}^d))$ with $\psi(\mu)=\mathcal{L}(X^{\mu})$. We show that $\psi$ is a contraction.
		
		Consider $\mu,\mu^{\prime}\in\mathbb{D}([0,T],\mathbb{R}^d)$. By Proposition \ref{estim_quad} and under assumptions $\left(\mathbf{A}_4\right)$ and $\left(\mathbf{A}_5\right)$, for every $t$, we have
		\begin{align*}
			\mathbb{E}(\sup_{0\leq s\leq t}|X_{s}^{\mu}-X_{s}^{\mu^{\prime}}|^{2})&\leq C\gamma^2 (2+t)\left(\int_{0}^{t}\mathbb{E}(\sup_{0\leq u\leq s}|X_{u}^{\mu}-X_{u}^{\mu^{\prime}}|^{2})+W(\mu_s, \mu^{\prime}_s)^2\mathrm{d}s\right)\\
			& + \int_{0}^{t} W(\mu_{s^-}, \mu_{s^-}^{\prime})^2 \mathrm{d} s \\
			&\leq 2C\gamma^2 (2+T)\left(\int_{0}^{t}\mathbb{E}(\sup_{0\leq u\leq s}|X_{u}^{\mu}-X_{u}^{\mu^{\prime}}|^{2}) \mathrm{d} s+ \int_{0}^{T} W_s(\mu, \mu^{\prime})^2 \mathrm{d}s\right).
		\end{align*}
		In the final line, we have made use of the fact that $W(\mu_s, \mu^{\prime}_s)\lor W(\mu_{s^-}, \mu^{\prime}_{s^-})\leq W_s(\mu, \mu^{\prime})$. By applying Gronwall’s inequality and considering the definition of the Wasserstein metric, we obtain
		\begin{equation*}
			W_T\left(\psi(\mu), \psi(\mu^{\prime})\right)^2\leq  C_{T,\gamma} \int_{0}^{T} W_s(\mu, \mu^{\prime})^2 \mathrm{d}s.
		\end{equation*}
		Here, $C_{T,\gamma}$ is a constant depend on $T$ and $\gamma$.
		By iterating the previous inequality, we get for any $n\geq 1$
		
		$$	W_T\left(\psi^n(\mu), \psi^n\left(\mu^{\prime}\right)\right)^2\leq \frac{C_{T,\gamma}^n T^n}{n !} W_T\left(\mu, \mu^{\prime}\right)^2,$$
		for $n$ large enough,  $\psi$ is a contraction. 
		
		This completes the proof.
	\end{proof}
	\subsection{Stability of Reflected Mckean-Vlasov equations}
	In this subsection, we study a stability result for the solution of the system $E(\mathcal{D},b,\sigma,\beta)$ with respect to both the initial condition and the coefficients.
	\begin{theorem}
		Assume that $\left(\mathbf{A}_1\right)$-$\left(\mathbf{A}_2\right)$ are satisfied. 
		Let $(X_0^n)_{n\geq 0}$, $X_0$ be sequence of random variables satisfying  $\left(\mathbf{A}_3\right)$. Let  $b,\sigma,\beta,(b^n)_{n\geq 0},(\sigma^n)_{n\geq 0},(\beta^n)_{n\geq 0}$ be sequence of random maps satisfying $\left(\mathbf{A}_4\right)$-$\left(\mathbf{A}_5\right)$ uniformly in $n$. In addition, we assume the following:
		\begin{itemize}
			\item[i)] $\lim _{n \rightarrow \infty}\mathbb{E}|X^n_0-X_0|^2=0$.
			\item[ii)]  There exists a  positive constant $\delta$, such that
			$$
			\sup _{n \geq 0}\left(\left|b^n(t,x,\mu)\right|+\left|\sigma^n(t,x,\mu)\right| \right)\leq \delta\left(1+|x|\right) .
			$$
			%\int_{\mathbb{R}^d \backslash\{0\}}\left|\beta^n(t,x,\mu,z)\right|^2\lambda(\rm{d}z)
			\item[iii)]  On each compact $K$ of $\mathbb{R}^d$, we have
			$$\lim _{n \rightarrow \infty} \sup _{0\leq t \leq T} \sup _{x \in K} \sup _{\mu \in \mathcal{P}_2\left(\mathbb{R}^d\right)}\left|b^n(t, x, \mu)-b(t, x, \mu)\right|+\left|\sigma^n(t, x, \mu)-\sigma(t, x, \mu)\right|=0,\;\;\text{a.s}$$
			$$ \lim _{n \rightarrow \infty} \sup _{0\leq t \leq T} \sup _{x \in K} \sup _{\mu \in \mathcal{P}_2\left(\mathbb{R}^d\right)}\int_{\mathbb{R}^d \backslash\{0\}}\left|\beta^n(t, x,\mu,z)-\beta(t, x,\mu,z)\right|^2\lambda(\rm{d}z)=0,\;\;\text{a.s}$$ 
		\end{itemize}
		then
		$$
		\lim _{n \rightarrow \infty} E\left(\sup _{0\leq t \leq T}\left|X_t^n-X_t\right|^2\right)=0,
		$$
		where $(X^n,K^n)$ is sequence of solutions of $E_{\mathcal{D}}(b^n,\sigma^n,\beta^n)$, and $(X,K)$ is the solution of $E_{\mathcal{D}}(b,\sigma,\beta)$.
	\end{theorem}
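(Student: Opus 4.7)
My plan is to apply the a priori estimate of Proposition~\ref{estim_quad} to the pair of reflected SDEs solved by $X^n$ and $X$, which for any stopping time $\tau\leq T$ gives
$$\mathbb{E}\bigl(\sup_{0\leq t\leq \tau}|X_t^n-X_t|^2\bigr) \leq C\,\mathbb{E}\Bigl(|X_0^n-X_0|^2 + [M^n-M]_\tau + |V^n-V|_\tau^2\Bigr),$$
where $M^n,V^n$ (resp.\ $M,V$) are the martingale and finite-variation parts of the drivers. Expanding $[M^n-M]_\tau$ after compensation and applying Cauchy--Schwarz to $|V^n-V|_\tau^2$, the right-hand side reduces to time integrals of $|b^n-b|^2$, $|\sigma^n-\sigma|^2$ and $\int|\beta^n-\beta|^2\lambda(dz)$ evaluated along the two processes. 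In each of these I would add and subtract the ``hybrid'' quantity obtained by replacing $b^n,\sigma^n,\beta^n$ by $b,\sigma,\beta$ while keeping the arguments $(X^n_\cdot,\mathcal{L}_{X^n_\cdot})$. This splits the error into a \emph{coefficient-convergence part} $\mathcal{E}_n$ and a \emph{Lipschitz part}; the latter is controlled by $(\mathbf{A}_4)$--$(\mathbf{A}_5)$ together with the bound $W(\mathcal{L}_{X^n_s},\mathcal{L}_{X_s})^2\leq \mathbb{E}|X^n_s-X_s|^2$, giving a contribution of the form $C_T\int_0^t \mathbb{E}(\sup_{u\leq s}|X^n_u-X_u|^2)\,ds$, which Gr\"onwall's lemma will absorb.

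\textbf{Uniform moment bound.} Before Gr\"onwall can be closed, I need $\sup_n \mathbb{E}(\sup_{t\leq T}|X^n_t|^2)<\infty$. This follows from the linear-growth hypothesis (ii), the estimate \eqref{estim_k_} applied to $K^n$ (using the common process $A$ furnished by $(\mathbf{A}_2)$), the growth condition on $\beta$ in $(\mathbf{A}_5)$, Burkholder--Davis--Gundy, and Gr\"onwall's lemma, following verbatim the final step of the proof of Theorem~\ref{theorem_1} but now uniform in $n$ thanks to (ii). Similarly $\sup_n\mathbb{E}|K^n|_T<\infty$ is controlled.

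\textbf{Convergence of $\mathcal{E}_n$ and main obstacle.} The delicate step is showing $\mathcal{E}_n\to 0$, since hypothesis (iii) gives convergence of the coefficients only on compact sets of $\mathbb{R}^d$, whereas the trajectories $X^n$ are not uniformly bounded. I would localize by setting $\tau^n_R:=\inf\{t:|X^n_t|\geq R\}\wedge T$ and split $\mathcal{E}_n=\mathcal{E}_n^R+\mathcal{R}_n^R$ according to whether $s\leq\tau^n_R$ or not. On $\{s\leq\tau^n_R\}$ the integrands are dominated by $\sup_{|x|\leq R}\sup_\mu|(\cdot)|^2$, which tends to $0$ a.s.\ by (iii) and is majorized by a constant multiple of $(1+R)^2$ by (ii) and $(\mathbf{A}_5)$; hence dominated convergence gives $\lim_n\mathcal{E}_n^R=0$ for each fixed $R$. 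For the remainder $\mathcal{R}_n^R$, Markov's inequality combined with the uniform moment bound yields $\mathbb{P}(\tau^n_R<T)\leq R^{-2}\sup_n\mathbb{E}(\sup_t|X^n_t|^2)$, while Cauchy--Schwarz and the linear growth of $b^n,\sigma^n$ and of $\int|\beta^n|^2\lambda(dz)$ control the integrand there; thus $\limsup_R\limsup_n\mathcal{R}_n^R=0$. Sending $n\to\infty$ and then $R\to\infty$ gives $\mathcal{E}_n\to 0$. Together with hypothesis (i) and Gr\"onwall's lemma applied to the Lipschitz part, this yields the desired convergence. The main obstacle is precisely the compatibility between the ``pointwise on compacts'' statement (iii) and the non-compact range of the processes, which the localization by $\tau^n_R$ — made possible by the uniform linear growth (ii) — is designed to bridge.
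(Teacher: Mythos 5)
Your overall architecture — Proposition \ref{estim_quad}, an add--and--subtract of a hybrid term, Gr\"onwall via the uniform Lipschitz constants and $W(\mathcal{L}_{X^n_s},\mathcal{L}_{X_s})^2\leq \mathbb{E}|X^n_s-X_s|^2$, then dominated convergence for the coefficient error — is exactly the paper's. The difference is where you place the hybrid: you compare $b^n$ and $b$ along the \emph{moving} arguments $(X^n_s,\mathcal{L}_{X^n_s})$, whereas the paper inserts $b^n(s,X_s,\mathcal{L}_{X_s})$, $\sigma^n(s,X_s,\mathcal{L}_{X_s})$, $\beta^n(s,X_{s^-},\mathcal{L}_{X_{s^-}},z)$, so that after Gr\"onwall the surviving error terms are $\mathbb{E}\int_0^T|b^n(s,X_s,\mathcal{L}_{X_s})-b(s,X_s,\mathcal{L}_{X_s})|^2\,\mathrm{d}s$, etc., evaluated along the \emph{fixed} limit process. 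There (iii) gives pointwise a.s.\ convergence of the integrand (apply it at the point $X_s(\omega)$), and (ii) together with $(\mathbf{A}_5)$ dominates it by the single integrable function $C(1+|X_s|^2)$, so dominated convergence applies with no localization and no uniform-in-$n$ moment bound on $X^n$.

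Your placement forces the localization by $\tau^n_R$, and that is where the argument has a genuine gap: to conclude $\limsup_R\limsup_n\mathcal{R}^R_n=0$ you must show
$\sup_n\mathbb{E}\bigl[\mathds{1}_{\{\sup_t|X^n_t|\geq R\}}\,(1+\sup_t|X^n_t|^2)\bigr]\to 0$ as $R\to\infty$,
i.e.\ uniform integrability of the family $\{\sup_t|X^n_t|^2\}_n$. The Cauchy--Schwarz step you invoke would require $\sup_n\mathbb{E}\sup_t|X^n_t|^4<\infty$, and neither this nor uniform integrability follows from the uniform bound $\sup_n\mathbb{E}\sup_t|X^n_t|^2<\infty$ that (ii), $(\mathbf{A}_3)$ and \eqref{estim_k_} actually deliver; Markov only controls $\mathbb{P}(\tau^n_R<T)$, not the conditional size of $\sup_t|X^n_t|^2$ on that event. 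The repair is a further add--and--subtract of $b^n(s,X_s,\mathcal{L}_{X_s})$ (and likewise for $\sigma^n,\beta^n$): the extra pieces $|b^n(s,X^n_s,\mathcal{L}_{X^n_s})-b^n(s,X_s,\mathcal{L}_{X_s})|$ are absorbed into the Gr\"onwall term by the uniform Lipschitz hypothesis, and you are reduced to the paper's situation. With that modification your proof closes; as written, the tail estimate does not.
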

	\begin{proof}
		We use Proposition \ref{estim_quad}, we get
		
		\begin{align*}
			\mathbb{E}(\sup_{0 \leq t \leq T}|X_t^{n}-X_t|^2)&\leq 
			C\Big(\mathbb{E}(|X_0^{n}-X_0|^2+\int_{0}^{T} |\sigma^n\left(s, X_s^{n}, \mathcal{L}_{X_s^n}\right)-\sigma\left(s, X_s, \mathcal{L}_{X_s}\right)|^2 \mathrm{d} s)\\
			&+\mathbb{E}( \int_{0}^{T} \int_{\mathbb{R}^d \backslash\{0\}}|\beta^n(s, X_{s^-}^{n}, \mathcal{L}_{X_{s^-}^n},z)-\beta(s, X_s
			,\mathcal{L}_{X_{s^-}}, z)|^2 \lambda(\mathrm{d} z)\mathrm{d} s)\\
			&+\mathbb{E}(T\int_{0}^{T} |b^n\left(s, X_s^{n}, \mathcal{L}_{X_s^n}\right)-b\left(s, X_s, \mathcal{L}_{X_s}\right)|^2 \mathrm{d} s)\Big).
		\end{align*}
		We add and subtract the quantities $\sigma^n\left(s, X_s, \mathcal{L}_{X_s}\right), \beta^n\left(s, X_s, \mathcal{L}_{X_{s^-}},z\right)$, and $b^n\left(s, X_s, \mathcal{L}_{X_s}\right)$, by using the fact that the coefficients are Lipschitz, and from the definition of Wassertein metric,  we get
		\begin{align*}
			\mathbb{E}\left(\sup_{0 \leq t \leq T}|X_t^{n}-X_t|^2\right)&\leq C_{T,\gamma} \Big(\int_0^T \mathbb{E}\left(\sup _{0\leq u \leq s}\left|X_u^n-X_u\right|^2\right) \mathrm{d}s+\mathbb{E}(\left|X_0^{n}-X_0|^2\right)\\
			&+ \mathbb{E}(\int_{0}^{T} |\sigma^n\left(s, X_s, \mathcal{L}_{X_s}\right)-\sigma\left(s, X_s, \mathcal{L}_{X_s}\right)|^2 \mathrm{d} s)\\
			&+\mathbb{E}( \int_{0}^{T} \int_{\mathbb{R}^d \backslash\{0\}}|\beta^n(s, X_{s^-}, \mathcal{L}_{X_{s^-}},z)-\beta(s, X_s
			,\mathcal{L}_{X_{s^-}}, z)|^2 \lambda(\mathrm{d} z)\mathrm{d} s)\\
			&+\mathbb{E}(\int_{0}^{T} |b^n\left(s, X_s, \mathcal{L}_{X_s}\right)-b\left(s, X_s, \mathcal{L}_{X_s}\right)|^2 \mathrm{d} s)\Big).
		\end{align*}
		By applying Gronwall's lemma, we obtain
		
		\begin{align*}
			\mathbb{E}\left(\sup_{0 \leq t \leq T}|X_t^{n}-X_t|^2\right)&\leq C_{T,\gamma} \Big(\mathbb{E}(|X_0^{n}-X_0|^2)+ \mathbb{E}(\int_{0}^{T} |\sigma^n\left(s, X_s, \mathcal{L}_{X_s}\right)-\sigma\left(s, X_s, \mathcal{L}_{X_s}\right)|^2 \mathrm{d} s)\\
			&+\mathbb{E}( \int_{0}^{T} \int_{\mathbb{R}^d \backslash\{0\}}|\beta^n(s, X_{s^-}, \mathcal{L}_{X_{s^-}},z)-\beta(s, X_s
			,\mathcal{L}_{X_{s^-}}, z)|^2 \lambda(\mathrm{d} z)\mathrm{d} s)\\
			&+\mathbb{E}(\int_{0}^{T} |b^n\left(s, X_s, \mathcal{L}_{X_s}\right)-b\left(s, X_s, \mathcal{L}_{X_s}\right)|^2 \mathrm{d} s)\Big).
		\end{align*}
		The first term of this inequality tend to $0$ by $(i)$. From the assumptions $\left(\mathbf{A}_5\right)$, $(ii)$, and $(iii)$, the Dominated Convergence Theorem
		guarantees that the three last terms tend to $0$. 
		
		This completes the proof.
	\end{proof}
	
	\subsection{Propagation of Chaos }
	Let $N\geq 1$ and $i\in\{1,\dots,N\}$. We consider $(X_{0}^i,W^i,\tilde{N}^i)$ as a sequence of independent copies of $(X_0,W,\tilde{N})$. In this subsection, we study the law of a solution to the interacting $N$-particle system $(X^{N,i},K^{N,i})_{1\leq i\leq N}$ given by the following
	
	$\textbf(S)\left\{\begin{aligned}
		&X_t^{N,i}= X_{0}^i+ \int_{0}^{t} b\left(s, X_s^{N,i}, \frac{1}{N}\sum_{j=1}^N\delta_{X_s^{j,N}}\right) \mathrm{d} s+ \int_{0}^{t} \sigma\left(s, X_s^{N,i}, \frac{1}{N}\sum_{j=1}^N\delta_{X_s^{j,N}}\right) \mathrm{d} W_s^i \\
		& \quad\quad		+\int_{0}^{t}\int_{\mathbb{R}^d\backslash\{0\}}\beta\left(s,  X_{s^-}^{N,i}, \frac{1}{N}\sum_{j=1}^N\delta_{X_{s^-}^{j,N}},z\right)  \tilde{N}^i(\mathrm{d} s, \mathrm{d} z)+ K_t^{i,N}, \quad t \in[0, T], \\
		&
		X_t^{i,N}\in D_t, \quad \text{for every}\; t \in[0, T], \\
		& \text{for every  càdlàg and adapted process} \;Z\in\mathcal{D},\; \int_0^t \langle X_{s}^{i,N} - Z_s,  \mathrm{d}K_s^{i,N} \rangle \leq 0\;\text{for every}\;t\geq 0.
	\end{aligned}\right.$
	\begin{proposition}
		Under assumptions $\left(\mathbf{A}_1\right)$-$\left(\mathbf{A}_5\right)$, a unique solution exists for the system $(\textbf{S})$.
	\end{proposition}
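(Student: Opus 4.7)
The plan is to reduce the interacting $N$-particle system $(\textbf{S})$ to a single, non-McKean-Vlasov, reflected SDE in the enlarged state space $\mathbb{R}^{Nd}$ and invoke Theorem \ref{Main_th}. I would set $\bar{X}_t:=(X_t^{1,N},\ldots,X_t^{N,N})$, $\bar{K}_t:=(K_t^{1,N},\ldots,K_t^{N,N})$, and introduce the product family $\bar{\mathcal{D}}=\{\bar{D}_t\}_{t\in[0,T]}$ with $\bar{D}_t:=D_t\times\cdots\times D_t$ ($N$ copies). The $N$ independent driving Brownian motions are concatenated into a single $Nm$-dimensional Brownian motion $\bar{W}_t:=(W_t^1,\ldots,W_t^N)$, and the $N$ independent compensated Poisson measures into a single compensated measure $\bar{\tilde N}$ on $[0,T]\times E$ with mark space $E:=\{1,\ldots,N\}\times(\mathbb{R}^d\setminus\{0\})$ and Lévy measure $\bar{\lambda}:=\sum_{i=1}^N\delta_i\otimes\lambda$. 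The lifted coefficients are defined componentwise through the empirical measure of the state: the $i$-th block of $\bar b(t,\bar x)$ is $b(t,x_i,\frac{1}{N}\sum_{j=1}^N\delta_{x_j})$, $\bar\sigma$ is block-diagonal with analogous blocks, and $\bar\beta(t,\bar x,(i,z))$ has $i$-th block $\beta(t,x_i,\frac{1}{N}\sum_j\delta_{x_j},z)$ and zeros elsewhere. Since the empirical measure is a function of the state alone, the lifted coefficients have no genuine dependence on the law of $\bar X$, and the resulting reflected equation is a standard SDE.

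Next I would verify that $(\bar{\mathcal{D}},\bar b,\bar\sigma,\bar\beta)$ satisfies the analogues of $(\mathbf{A}_1)$–$(\mathbf{A}_5)$ in dimension $Nd$. Property $(\mathbf{A}_1)$ rests on the fact that finite products of elements of $\mathcal{C}$ are again bounded closed convex sets with nonempty interior, together with the elementary bound $d_H(D_s^N,D_t^N)\leq\sqrt{N}\,d_H(D_s,D_t)$, which transfers the càdlàg property. For $(\mathbf{A}_2)$, the choice $\bar A_t:=(A_t,\ldots,A_t)$ lies in $\mathring{\bar{\mathcal{D}}}$ with $\inf_t d(\bar A_t,\partial\bar D_t)=\inf_t d(A_t,\partial D_t)>0$ and belongs to $\mathcal{S}^{2}_{0,T}$ in $\mathbb{R}^{Nd}$. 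Assumption $(\mathbf{A}_3)$ holds because the $X_0^i$ are independent copies of $X_0$. The heart of the verification is $(\mathbf{A}_4)$–$(\mathbf{A}_5)$: the key ingredient is the synchronous-coupling inequality
\begin{equation*}
W\Bigl(\tfrac{1}{N}\sum_{j=1}^N\delta_{x_j},\ \tfrac{1}{N}\sum_{j=1}^N\delta_{y_j}\Bigr)^2 \leq \tfrac{1}{N}|\bar{x}-\bar{y}|^2,
\end{equation*}
which, combined with the Lipschitz estimates on $b,\sigma,\beta$, yields global Lipschitz estimates for $\bar b,\bar\sigma,\bar\beta$ in $\bar x$ with a constant depending on $\gamma$ and $N$; the trivial dependence on the law variable makes the Wasserstein-Lipschitz condition in $\mu$ automatic, and the linear growth bound on $\bar\beta$ follows analogously.

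The final step is to apply Theorem \ref{Main_th} to the lifted data: the unique solution $(\bar X,\bar K)$ in $\mathbb{R}^{Nd}$ decomposes, by the block structure of $\bar\sigma$ and $\bar\beta$, into the particle trajectories $(X^{i,N},K^{i,N})_{1\leq i\leq N}$ solving $(\textbf{S})$, and uniqueness in the lifted problem immediately transfers to uniqueness for $(\textbf{S})$. The main technical point to dispatch, and the only place the argument departs from a mechanical citation, is that Theorem \ref{Main_th} was stated for a single Brownian motion and a single Poisson measure on $\mathbb{R}^d\setminus\{0\}$, whereas the lifted driving data is $(\bar W,\bar{\tilde N})$ with a general mark space $(E,\bar\lambda)$. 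This is not a real obstacle: the proof of Theorem \ref{Main_th} uses only It\^o's isometry and the Burkholder–Davis–Gundy inequality for the jump part, both of which hold verbatim on any $\sigma$-finite mark space with $\int_E(1\wedge|z|^2)\,\bar\lambda(dz)<\infty$, a condition that is satisfied here by construction. Once this remark is in place, the conclusion follows.
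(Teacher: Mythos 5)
Your proposal is correct, and it hinges on exactly the same key fact as the paper's proof, namely the bound $W\bigl(\frac{1}{N}\sum_j\delta_{x_j},\frac{1}{N}\sum_j\delta_{y_j}\bigr)\le N^{-1/2}|\bar{x}-\bar{y}|$; but the packaging differs. The paper's proof is a one-liner: it records this inequality and says the argument of Theorem \ref{Main_th} (i.e.\ the localized Picard fixed point of Theorem \ref{theorem_1} run on the coupled system, the law-dependence having become an ordinary Lipschitz dependence on the state) goes through unchanged. You instead perform an honest reduction to the already-proved theorem by lifting to $\mathbb{R}^{Nd}$ with the product domain $\bar{D}_t=D_t^N$, concatenated noise, and an enlarged mark space. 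This buys a cleaner citation at the cost of two verifications the paper never needs: (a) the mark-space generalization, which you correctly identify and dispatch (the proofs only use the isometry and Burkholder--Davis--Gundy, so a $\sigma$-finite measure on $\{1,\dots,N\}\times(\mathbb{R}^d\setminus\{0\})$ causes no trouble given $(\mathbf{A}_5)$); and (b) the equivalence of the reflection condition in the product domain with the $N$ particle-wise conditions of $(\textbf{S})$, which you assert via ``block structure'' but should state explicitly: the particle-wise inequalities sum to give the product-domain inequality, and conversely, testing the product-domain inequality against $\bar{Z}$ with $Z^j=X^{j,N}$ for $j\neq i$ (admissible since each $X^{j,N}\in\mathcal{D}$) recovers the $i$-th particle-wise inequality. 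With that one line added, your argument is complete and arguably more self-contained than the paper's, since it turns ``the proof follows similarly'' into an actual application of Theorem \ref{Main_th}; your remaining checks of $(\mathbf{A}_1)$--$(\mathbf{A}_5)$ for the lifted data (product of convex bodies, $d_H(D_s^N,D_t^N)\le\sqrt{N}\,d_H(D_s,D_t)$, $\bar{A}=(A,\dots,A)$, and the $N$-independent Lipschitz constant from the synchronous-coupling bound) are all sound.
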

	\begin{proof}
		Since for every $u=(u_1,\dots,u_N)$ and $v=(v_1,\dots,v_N)$ in $\mathbb{R}^{dN}$, $$W\left(\frac{1}{N} \sum_{i=1}^N \delta_{u_i}, \frac{1}{N} \sum_{i=1}^N \delta_{v_i}\right) \leq\frac{1}{\sqrt{N}}|u-v|,$$ the proof follows similarly as the proof of Theorem \ref{Main_th}.
	\end{proof}
	\begin{theorem}
		Let $(X^{i},K^{i})$ be the solution of $RP(Y^i)_{\mathcal{D}}$, where $Y^i$ is given by:
		$$Y_t^i=X_{0}^i+ \int_{0}^{t} b\left(s, X_s^i, \mathcal{L}_{X_s^i}\right) \mathrm{d} s+ \int_{0}^{t} \sigma\left(s, X_s^i, \mathcal{L}_{X_s^i}\right) \mathrm{d} W_s^i+\int_{0}^{t}\int_{\mathbb{R}^d\backslash\{0\}}\beta\left(s,  X_{s^-}^i, \mathcal{L}_{X_{s^-}^i},z\right)  \tilde{N}^i(\mathrm{d} s, \mathrm{d} z).$$ 
		Then, there exists a constant $C$, which depend on $T$, such that
		$$	\mathbb{E}(\sup_{0 \leq t \leq T}|X_t^{i,N}-X_t^i|^2) \leq C \begin{cases}N^{-1 / 2}, & d<4 \\ N^{-1 / 2} \log N, & d=4 \\ N^{-\frac{2}{d}}, & d>4\end{cases}$$
		Consequently, the following convergence hold true:
		$$
		\lim _{N \rightarrow \infty} \sup _{1 \leq i \leq N} \mathbb{E} \sup _{0 \leq t \leq T}|X_t^{i, N}-X_t^i|^2=0 .
		$$
		And for all $k\geq 1$, the following weak convergence holds true:
		$$
		\mathcal{L}_{\left(X^{N, 1}, X^{N, 2}, \ldots, X^{N, k}\right)} \underset{N \rightarrow \infty}{\longrightarrow} \mathcal{L}_{\left(X^1,X^2, \ldots,X^k\right)}.
		$$
	\end{theorem}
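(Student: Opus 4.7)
The plan is to couple the interacting particle $X^{i,N}$ with its independent McKean--Vlasov copy $X^i$ through the same driving Brownian motion $W^i$, the same Poisson measure $\tilde{N}^i$, and the same initial condition $X_0^i$, and then apply the a priori semimartingale estimate of Proposition \ref{estim_quad} to the two Skorokhod problems, both reflected in $\mathcal{D}$. Writing $\mu^N_s := \frac{1}{N}\sum_{j=1}^N \delta_{X_s^{j,N}}$, Proposition \ref{estim_quad} combined with the Lipschitz assumptions $(\mathbf{A}_4)$--$(\mathbf{A}_5)$ yields, after Cauchy--Schwarz on the drift term,
\begin{equation*}
\mathbb{E}\!\sup_{0\leq t \leq T}|X_t^{i,N}-X_t^i|^2
\;\leq\; C_{T,\gamma}\int_0^T\!\Bigl(\mathbb{E}\!\sup_{0\leq u\leq s}|X_u^{i,N}-X_u^i|^2
+ \mathbb{E}\,W\!\bigl(\mu^N_s,\mathcal{L}_{X_s^i}\bigr)^2\Bigr)ds,
\end{equation*}
where the jump term is handled exactly as in the proofs of Theorems \ref{theorem_1} and \ref{Main_th}.

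Next, I would decompose the Wasserstein term by inserting the empirical measure $\bar\mu^N_s := \frac{1}{N}\sum_{j=1}^N \delta_{X_s^{j}}$ of the independent copies:
\begin{equation*}
W\!\bigl(\mu^N_s,\mathcal{L}_{X_s^i}\bigr)^2 \;\leq\; 2\,W\!\bigl(\mu^N_s,\bar\mu^N_s\bigr)^2 + 2\,W\!\bigl(\bar\mu^N_s,\mathcal{L}_{X_s^i}\bigr)^2.
\end{equation*}
The first piece is controlled by the natural coupling $\delta_{X_s^{j,N}}\leftrightarrow\delta_{X_s^j}$, giving $W(\mu^N_s,\bar\mu^N_s)^2 \leq \frac{1}{N}\sum_{j=1}^N |X_s^{j,N}-X_s^j|^2$; by the exchangeability of the system $(\textbf{S})$ and of $(X^j)_{j\geq 1}$, taking expectations yields $\mathbb{E}\,W(\mu^N_s,\bar\mu^N_s)^2 \leq \mathbb{E}|X_s^{i,N}-X_s^i|^2$. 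The second piece is a purely statistical quantity concerning i.i.d.\ samples from $\mathcal{L}_{X_s^i}$, and this is exactly where the dimension-dependent rate enters: invoking the Fournier--Guillin bound on the Wasserstein-2 rate for empirical measures of i.i.d.\ samples with finite second moment (uniform in $s\in[0,T]$, thanks to $\sup_{s\leq T}\mathbb{E}|X_s^i|^2 < \infty$, which holds by Theorem \ref{Main_th}) gives
\begin{equation*}
\sup_{0\leq s\leq T}\mathbb{E}\,W\!\bigl(\bar\mu^N_s,\mathcal{L}_{X_s^i}\bigr)^2 \;\leq\; C\,\varepsilon(N,d),\qquad
\varepsilon(N,d)=\begin{cases}N^{-1/2},& d<4,\\ N^{-1/2}\log N,& d=4,\\ N^{-2/d},& d>4.\end{cases}
\end{equation*}
Plugging both bounds back in, the $|X_s^{i,N}-X_s^i|^2$ contribution is absorbed into the Gronwall term, and one application of Gronwall's lemma yields the claimed rate.

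The second assertion $\sup_{1\leq i\leq N}\mathbb{E}\sup_{0\leq t\leq T}|X_t^{i,N}-X_t^i|^2\to 0$ is immediate by exchangeability, and the weak convergence of finite marginals $\mathcal{L}_{(X^{N,1},\dots,X^{N,k})}\to \mathcal{L}_{(X^1,\dots,X^k)}$ follows because $L^2$ convergence of each coordinate (in the sup-norm on $[0,T]$) implies convergence in probability under $d_{\mathbb{S}}$ by Remark \ref{rq2}, hence convergence in distribution of the joint vector in $\mathbb{D}([0,T],\mathbb{R}^d)^k$. The main obstacle is the first step: since Proposition \ref{estim_quad} is stated for two solutions with a \emph{common} reflection family, I must make sure the coupled diffusive/jump inputs can be written as semimartingales $Y^{i,N}$ and $Y^i$ with $Y^{i,N}_0=Y^i_0$, and that the Lipschitz assumption on $\beta$ transforms the difference of the compensated Poisson integrals into a bound by $\int_0^t(|X_{s^-}^{i,N}-X_{s^-}^i|^2 + W(\mu^N_{s^-},\mathcal{L}_{X_{s^-}^i})^2)ds$; this is routine but requires careful handling of the $s^-$ arguments, paralleling the fixed-point step in the proof of Theorem \ref{Main_th}. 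All other steps are standard, and the dichotomy in the rate is entirely inherited from the empirical-measure bound.
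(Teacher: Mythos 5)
Your proposal is correct and follows essentially the same route as the paper: couple via common drivers, apply Proposition \ref{estim_quad} with the Lipschitz assumptions, insert the empirical measure of the independent copies (your triangle-inequality splitting of $W(\mu^N_s,\mathcal{L}_{X_s^i})$ is exactly the paper's add-and-subtract of $\sigma(s,X_s^i,\mu_s^N)$ and $\sigma(s,X_s^i,\mu_s)$, etc.), absorb the coupled term by exchangeability and Gronwall, and invoke the Fournier--Guillin-type rate (the paper cites Theorem 5.8 of Carmona--Delarue) for the empirical-measure error. The concluding steps on exchangeability and weak convergence via $W_{d_{\mathbb{S}}}\leq W_T$ also match the paper's argument.
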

	\begin{proof}Set $$\mu^N= \frac{1}{N}\sum_{j=1}^N\delta_{X^{j,N}},\; \text{and} \;\mu= \frac{1}{N}\sum_{j=1}^N\delta_{X^{j}}.$$ By applying the inequality in Proposition \ref{estim_quad}, and by adding and subtracting the terms $\sigma\left(s, X_s^i, \mu_s^N\right),\; \sigma\left(s, X_s^i, \mu_s\right)$, $\beta\left(s, X_{s^-}^i, \mu_{s^-}^N,z\right),\;\beta\left(s, X_{s^-}^i, \mu_{s^-},z\right)$, $b\left(s, X_s^i, \mu_s^N\right)$, and $b\left(s, X_s^i, \mu_s\right)$, we obtain
		\begin{align*}
			\mathbb{E}(\sup_{0 \leq t \leq T}|X_t^{i,N}-X_t^i|^2)&\leq 
			C\Big(\mathbb{E}(\int_{0}^{T} |\sigma\left(s, X_s^{i,N}, \mu_s^N\right)-\sigma\left(s, X_s^i, \mu_s^N\right)|^2 \mathrm{d} s)\\
			&+\mathbb{E}(\int_{0}^{T} |\sigma\left(s, X_s^{i}, \mu_s^N\right)-\sigma(s, X_s^i, \mu_s)|^2 \mathrm{d} s)\\
			&+\mathbb{E}(\int_{0}^{T} |\sigma\left(s, X_s^{i}, \mu_s\right)-\sigma(s, X_s^i, \mathcal{L}_{X^i_s})|^2 \mathrm{d} s)\\
			&+\mathbb{E}( \int_{0}^{T} \int_{\mathbb{R}^d\backslash\{0\}}|\beta(s, X_{s^-}^{i,N}, \mu_{s^-}^N,z)-\beta(s, X_{s^-}^{i}, \mu_{s^-}^N,z)|^2 \lambda(\mathrm{d} z)\mathrm{d} s)\\
			&+\mathbb{E}( \int_{0}^{T} \int_{\mathbb{R}^d\backslash\{0\}}|\beta(s, X_{s^-}^{i}, \mu_{s^-}^N,z)-\beta(s, X_{s^-}^{i}, \mu_{s^-},z)|^2 \lambda(\mathrm{d} z)\mathrm{d} s)\\
			&+\mathbb{E}( \int_{0}^{T} \int_{\mathbb{R}^d\backslash\{0\}}|\beta(s, X_{s^-}^{i}, \mu_{s^-},z)-\beta(s, X_{s^-}^{i}, \mathcal{L}_{X^i_{s^-}},z)|^2 \lambda(\mathrm{d} z)\mathrm{d} s)\\
			&+\mathbb{E}(T\int_{0}^{T} |b\left(s, X_s^{i,N}, \mu_s^N\right)-b\left(s, X_s^i, \mu_s^N\right)|^2 \mathrm{d} s)\\
			&+\mathbb{E}(T\int_{0}^{T} |b\left(s, X_s^{i}, \mu_s^N\right)-b(s, X_s^i, \mu_s)|^2 \mathrm{d} s)\\
			&+\mathbb{E}(T\int_{0}^{T} |b\left(s, X_s^{i}, \mu_s\right)-b(s, X_s^i, \mathcal{L}_{X^i_s})|^2 \mathrm{d} s)\Big).\\
		\end{align*}
		Using assumptions $\left(\mathbf{A}_4\right)$ and $\left(\mathbf{A}_5\right)$, along with Gronwall's Lemma and the exchangeability of $(X^{i,N},X^i)$, we find:
		\begin{align*}
			\mathbb{E}(\sup_{0 \leq t \leq T}|X_t^{i,N}-X_t^i|^2)&\leq C_{T,\gamma}\left(  \mathbb{E}\left(\int_0^T  W(\mu_s,\mathcal{L}_{X_s^i})^2 \rm{d}s\right) + \mathbb{E}\left(\int_0^T  W(\mu_{s^-},\mathcal{L}_{X_{s^-}^i})^2 \rm{d}s\right)\right).
		\end{align*}
		By Theorem \ref{theorem_1}, $X^i\in\mathcal{S}^2_{0,T}$, then $\mu$ and $\mathcal{L}_{X^i}$ belong to $\mathcal{P}_2(\mathbb{D}([0,T],\mathbb{R}^d))$. Similar to the proof of Theorem \ref{theorem_1}, it follows that $t\to \mu_t$ and $t\to\mathcal{L}_{X_t}$ are càdlàg with respect to the Wasserstein metric, hence
		\begin{equation}
			\mathbb{E}(\sup_{0 \leq t \leq T}|X_t^{i,N}-X_t^i|^2)\leq 2TC_{T,\gamma} \sup_{0 \leq t \leq T}\mathbb{E}\left(W(\mu_s,\mathcal{L}_{X_s^i})^2\right). \label{chaos}
		\end{equation}
		By Theorem 5.8 in \cite{Delarue}, there exists a positive constant $C$ such that
		\begin{equation}
			\mathbb{E}\left[W\left(\mu_s, \mathcal{L}_{X_s^i}\right)^2\right] \leq C \begin{cases}N^{-1 / 2}, & d<4, \\ N^{-1 / 2} \log N, & d=4, \\ N^{-\frac{2}{d}}, & d>4.\end{cases} \label{ineq-chaos}\end{equation}
		The second point follows immediately from this inequality.
		
		For the last point, we use the second point of Remark \ref{rq2}, the definition of Wasserstein metric and the exchangeability of $(X^{i,N},X^i)$,  we get
		\begin{align*}
			W_{d_{\mathbb{S}}}\left(\mathcal{L}_{\left(X^{N, 1}, X^{N, 2}, \ldots, X^{N, k}\right)}, \mathcal{L}_{\left(X^1,X^2, \ldots,X^k\right)}\right)^2&\leq W_{
				T}\left(\mathcal{L}_{\left(X^{N, 1}, X^{N, 2}, \ldots, X^{N, k}\right)}, \mathcal{L}_{\left(X^1,X^2, \ldots,X^k\right)}\right)^2\\
			& \leq k \mathbb{E}(\sup _{0\leq i \leq k}\left|X^{i, n}-X^i\right|^2 ).
		\end{align*}
		
		Again by remark \ref{rq2}, and  Theorem 6.9 in \cite{Villani}, we get $\mathcal{L}_{\left(X^{N, 1}, X^{N, 2}, \ldots, X^{N, k}\right)}$ converges weakly to $\mathcal{L}_{\left(X^1,X^2, \ldots,X^k\right)}$.
		
		This completes the proof.
	\end{proof}
	

\begin{thebibliography}{99}
		\bibitem{Adam}{Adams, D., Dos Reis, G., Ravaille, R., Salkeld, W.,  Tugaut, J.: Large deviations and exit-times for reflected McKean–Vlasov equations with self-stabilising terms and superlinear drifts. Stochastic Process. Appl, 146, 264-310 (2022).}
		\bibitem{Bass}{Bass, R. F, Hsu, E. P.: Pathwise uniqueness for reflecting Brownian motion in Euclidean domains. Probab. Theory Related Fields, 117 183-200 (2000).}
		\bibitem{Bill}{Billingsley, P.: Convergence of Probability Measures. 2nd edn. JohnWiley, NewYork (1999).}
		\bibitem{burzdy}{Burdzy, K.,  Kang, W.,  Ramanan, K.: The Skorokhod problem in a time-dependent interval, Stochastic Process. Appl, 119, 428-452 (2009).}
		\bibitem{Delarue}{Carmona, R., Delarue, F.: Probabilistic Theory of Mean Field Games with Applications I: Mean Field FBSDEs, Control, and Games. Springer, Berlin (2018a).}
		\bibitem{cost}{Costantini, C.: The Skorohod oblique reflection problem in domains with corners and application to stochastic differential equations. Probab. Theory Related Fields, 91, 43-70 (1992). }
		\bibitem{elkaroui}{Costantini, C. Gobet. E, El Karoui. N.: Boundary sensitivities for diffusion processes in time dependent domains. Appl. Math. Optim, 54, 159-187 (2006).}
		\bibitem{depuis2}{Dupuis, P.,  Ishii, H.: SDEs with oblique reflection on nonsmooth domains. Ann.Probab, 554-580 (1993).}
		\bibitem{garnier}{Garnier, J., Papanicolaou, G.,  Yang, T. W.: Large deviations for a mean field model of systemic risk. SIAM J. Financial Math, 4(1), 151-184 (2013).}
		\bibitem{petit}{Gegout Petit, A.  Pardoux, É.: Equations différentielles stochastiques rétrogrades réfléchies dans un convexe.  Stoch. Stoch. Rep, 57, 111-128 (1996).}
		\bibitem{jarni20}{Hilbert, A.,  Jarni, I., Ouknine, Y.: On reflected stochastic differential equations driven by regulated semimartingales. Statist. Probab. Lett. 167 (2020).}
		\bibitem{kruk}{Kruk, L.,  Lehoczky, J., Ramanan, K.,  Shreve, S.: An explicit formula for the Skorokhod map on $[0, a]$. Ann. Prob,  35, 1740-1768 (2007).}
		\bibitem{Kwon}{Kwon, Y.: Reflected Brownian motion in Lipschitz domains with oblique reflection. Stochastic Process. Appl. 51(2), 191-205  (1994).}
		\bibitem{lambson}{Lambson, V.E.: Self-enforcing collusion in large dynamic markets. J. Econom. Theory, 34, 282-291. 34 (1984).}
		\bibitem{lasry}{Lasry, J.M., Lions, P.L.: Mean field games. Jpn. J. Math, 2 , 229-260 (2007).}
		\bibitem{Lund}{Lundström, N.L.P., Önskog. T.: Stochastic and partial differential equations on nonsmooth time-dependent domains. Stochastic Process. Appl. 129(4), 1097-1131 (2019).}
		\bibitem{Nyst}{Nyström, N., Önskog, T.: The Skorohod oblique reflection problem in time-dependent domains. Ann. Probab. 38 (6), 2170-2223 (2010).}
		\bibitem{ouknine20}{Eddahbi, M., Fakhouri, I.,   Ouknine, Y.:Reflected BSDEs with jumps in time–dependent convex càdlàg domains.  Stochastic
			Process. Appl. 130 (11), 6515-6555 (2020).}
		\bibitem{ouknine18}{ Fakhouri, I.,   Ouknine, Y.,  Ren, Y.: Reflected backward stochastic differential equations with jumps in time–dependent random convex domains. Stochastics. 90 (2),  256-296 (2018).}
		\bibitem{jarni21}{ Jarni, I., Ouknine, Y.: On reflection with two-sided jumps. J. Theoret. Probab. 34, 1811-1830 (2021).}
		\bibitem{lion}{  Lions, P.L.,  Sznitman, A.S.:  Stochastic differential equations with reflecting boundary conditions. Comm. Pure Appl. Math, 37, 511-537 (1984).}
		\bibitem{Menaldi}{Menaldi, J.L.: Stochastic variational inequality for reflected diffusion. Indiana Univ. Math. J. 32, 733-744 (1983).}
		\bibitem{slom15}{ Klimsiak, T.,  Rozkosz, A.,  S{\l}omi\'nsk, L.: Reflected BSDEs in time–dependent convex regions. Stochastic Process. Appl, 125,  571-596 (2015).}
		\bibitem{saicho}{ Saisho, Y.: Stochastic differential equations for multidimensional domain with reflecting
			boundary. Probab. Theory Related Fields, 74, 455-477 (1987).}
		\bibitem{skorokhod}{ Skorokhod, A.V.: Stochastic equations for diffusions in a bounded region.  Theory Probab. Appl,  6, 264-274 (1961).}
		\bibitem{slominski93}{ S{\l}omi\'nsk, L.: On existence, uniqueness and stability of solutions of multidimensional SDE’s with reflecting conditions. Ann. Inst. H. Poincaré Probab. Statist,  29, 163-198 (1993).}
		\bibitem{slominski2010}{ S{\l}omi\'nsk, L., Wojciechowski, T.: Stochastic differential equations with jump reflection at
			time-dependent barriers. Stochastic Process. Appl,  120, 1701-1721  (2010)}.
		\bibitem{sznitman}{Sznitman, A. S.: Nonlinear reflecting diffusion process, and the propagation of chaos and fluctuations associated. J. Funct. Anal, 56(3), 311-336 (1984).}
		\bibitem{Villani}{Villani, C.: Optimal transport: old and new. Vol. 338, Springer, Berlin,  (2009).}
		\bibitem{tanaka}{  Tanaka, H.: Stochastic differential equations with reflecting boundary condition in convex region. Hiroshima Math. J, 9, 163-177 (1967).}
	\end{thebibliography}
\end{document}